\newcommand\reallywidetilde[1]{\ThisStyle{%
  \setbox0=\hbox{$\SavedStyle#1$}%
  \stackengine{-.1\LMpt}{$\SavedStyle#1$}{%
    \stretchto{\scaleto{\SavedStyle\mkern.2mu\AC}{.5150\wd0}}{.6\ht0}%
  }{O}{c}{F}{T}{S}%
}}
\def\g2{ \hbox{\got g}_2}
\def\f4{\hbox{\got f}_4}
\def\e6{\hbox{\got e}_6}
\def\fs4{\hbox{\gots f}_4}
\def\F4{\hbox{\got F}_4}
\def\Fs4{\hbox{\gots F}_4}
\def\al{\ifcase\xypolynode\or F \or A\or B\or C\or D\or G\fi}
\def\ala{\ifcase\xypolynode\or a \or b\or c\or d\or g\or f\fi}
\def\Ddots{\mathinner{\mkern1mu\raise\p@
\vbox{\kern7\p@\hbox{.}}\mkern2mu
\raise4\p@\hbox{.}\mkern2mu\raise7\p@\hbox{.}\mkern1mu}}
\theoremstyle{plain}%
  \newtheorem{theorem}{Theorem}[section]
  \newtheorem{corollary}{Corollary}[section]
  \newtheorem{proposition}{Proposition}[section]
  \newtheorem{lemma}{Lemma}[section]
  \newtheorem{definition}{Definition}[section]
  \newtheorem{notation}[definition]{Notation}}
\newtheorem{remark}{Remark}[section]
\newfont{\hueca}{msbm10}
\def\hu #1{\hbox{\hueca #1}}\def\hu #1{\hbox{\hueca #1}}
\begin{document}

\title{Leibniz superalgebras with a set grading}

\thanks{The first and second  authors acknowledge financial assistance by the Centre for Mathematics of the University of Coimbra (UID/MAT/00324/2019, funded by the Portuguese
Government through FCT/MEC and co-funded by the European Regional Development Fund through the Partnership Agreement PT2020). Third and fourth authors are supported by the PCI of the UCA `Teor\'\i a de Lie y Teor\'\i a de Espacios de Banach', by the PAI with project number FQM298.}

\author[H. Albuquerque ]{Helena~Albuquerque}

\address{Helena~Albuquerque, CMUC, Departamento de Matem\'atica, Universidade de Coimbra, Apartado 3008,
3001-454 Coimbra, Portugal. \hspace{0.1cm} {\em E-mail address}: {\tt lena@mat.uc.pt}}

\author[E. Barreiro]{Elisabete~Barreiro}

\address{Elisabete~Barreiro, CMUC, Departamento de Matem\'atica, Universidade de Coimbra, Apartado 3008,
3001-454 Coimbra, Portugal. \hspace{0.1cm} {\em E-mail address}: {\tt mefb@mat.uc.pt}}{}

\author[A.J. Calder\'on]{A.J.~Calder\'on}

\address{A.J.~Calder\'on, Departamento de Matem\'aticas, Universidad de C\'adiz, Campus de Puerto Real, 11510, Puerto Real, C\'adiz, Espa\~na. \hspace{0.1cm} {\em E-mail address}: {\tt ajesus.calderon@uca.es}}{}

\author[Jos\'{e} M. S\'{a}nchez]{Jos\'{e} M. S\'{a}nchez}

\address{Jos\'{e} M. S\'{a}nchez, Departamento de Matem\'aticas, Universidad de C\'adiz, Campus de Puerto Real, 11510, Puerto Real, C\'adiz, Espa\~na. \hspace{0.1cm} {\em E-mail address}: {\tt txema.sanchez@uca.es}}

\begin{abstract}
Consider a Leibniz superalgebra  $\mathfrak L$ additionally graded by an arbitrary set $I$ (set grading). We show that $\mathfrak L$ decomposes as the sum of well-described graded ideals plus (maybe) a suitable linear subspace.  In the case of ${\mathfrak L}$ being of maximal length, the simplicity of ${\mathfrak L}$ is also characterized in terms of connections.

{\it Keywords}: Graded Leibniz superalgebras, Infinite dimensional Leibniz superalgebras, Structure Theory.

{\it 2010 MSC}: 17B70, 17B65,  17B05.

\end{abstract}

\maketitle

\section{Introduction and previous definitions}

One one hand, Leibniz superalgebras arise as an extension of the Leibniz algebras (see \cite{8,5}, among others) in a similar way that Lie superalgebras extends to Lie algebras. Indeed, the class of Leibniz superalgebras also extends the one of Lie superalgebras by removing the skew-supersymmetry, which is of interest in the formalism of mechanics of Nambu \cite{Dale}. On the other hand, the interest in gradings on superalgebras has been remarkable in the last years (for example see \cite{Draper2, Sudarkin} for the class of Lie superalgebras and \cite{Kac} for the case of the Jordan superalgebra $K_{10}$). However gradings by means of an arbitrary set, not necessarily a group, have been considered in  the literature just in a slightly way. This kind of graduation was presented in \cite{19} and called Lie gradings. A complete and recent review of the state of the art can be found in \cite{Origen2}. Motivated by the results obtained for Lie and Leibniz algebras in \cite{Origen1, Leibniz_set_graded}, respectively, in the present paper we study arbitrary Leibniz superalgebras (not necessarily simple or finite-dimensional) and over an arbitrary base field $\mathbb{K}$ graded by means of an arbitrary set $I$, by focusing on its structure. In Section 2 we extend the techniques of connections in the support of the set-grading to the framework of a Leibniz superalgebra so as to show that it is of the form ${\frak L} = U + \sum_jI_j$ with $U$ a linear subspace of a distinguish homogeneous subspace ${\frak L}_{\mathfrak{o}}$ and any $I_j$ a well described set-graded ideal of ${\frak L}$, satisfying $[I_j,I_k] = 0$ if $j \neq k$. In the final section, and under certain conditions, we focuss on those of maximal length and characterize the simplicity of this class of superalgebras in terms of connections.

\begin{definition}\rm
A {\it Leibniz superalgebra} $({\frak L}, [\cdot, \cdot])$ is a ${\hu Z}_2$-graded algebra ${\frak L} = {\frak L}^{\bar 0} \oplus {\frak L}^{\bar 1}$  over an arbitrary base field ${\mathbb K}$  endowed with a bilinear product $[\cdot, \cdot]$ satisfying
\begin{itemize}
\item[i.] $[x,y] \in {\frak L}^{{\bar i}+{\bar j}}$
\item[ii.] $[x,[y,z]] = [[x,y],z] - (-1)^{{\bar j}{\bar k}}[[x,z],y]$ \hspace{0.2cm} {\em (Super Leibniz identity)}
\end{itemize}
for any homogenous elements $x \in {\frak L}^{\bar i}, y \in {\frak L}^{\bar j}, z \in {\frak L}^{\bar k},$ with ${\bar i}, {\bar j}, {\bar k} \in {\hu Z}_2$.
\end{definition}

\begin{definition}\rm
For an arbitrary (non-empty) set $I,$ we say that a Leibniz superalgebra  ${\frak L}$   {\it has a set grading} (by means of $I$) or it is {\it set-graded} if $${\frak L} = \bigoplus\limits_{a \in I} {\frak L}_a,$$  for any $a, b \in I$; and either $[{\frak L}_a, {\frak L}_b] = 0$ or $0 \neq [{\frak L}_a, {\frak L}_b] \subset {\frak L}_c$ for some (unique) $c \in I$, and where the homogeneous components are (graded) linear subspace satisfying
\begin{equation}\label{separa}
\hbox{${\frak L}_a = {\frak L}_a^{\bar 0} \oplus {\frak L}_a^{\bar 1},$ \hspace{0.1cm} being ${\frak L}_a^{\bar i} := {\frak L}_a \cap {\frak L}^{\bar i},$ \hspace{0.1cm} for ${\bar i} \in {\hu Z}_2$.}
\end{equation}

\end{definition}
\noindent Clearly ${\frak L}^{\overline 0}$ is a set-graded Leibniz algebra (see \cite{Leibniz_set_graded}). Moreover, if the identity $[x,y]=-(-1)^{{\bar i}{\bar j}}[y,x]$, with $  {\bar i}, {\bar j} \in {\hu Z}_2$, holds then Super Leibniz identity becomes Super Jacobi identity and so set-graded Leibniz superalgebras generalize set-graded Lie superalgebras (therefore, also set-graded Lie algebras studied in \cite{Origen1}), which is of interest in the forma\-lism of mechanics of Nambu \cite{Dale}. We note that a set grading of ${\frak L}$ provides a refinement of the initial ${\hu Z}_2$-grading of ${\frak L}$ and that split Leibniz superalgebras, graded Leibniz superalgebras, split Leibniz algebras, graded Leibniz algebras, split Lie superalgebras, graded Lie superalgebras, split Lie algebras and graded Lie algebras are examples of set-graded Leibniz superalgebras. Hence, the present paper also extends the results in \cite{graalg}-\cite{Yo2}.

 The usual regularity concepts are considered in a graded sense (compatible with the initial ${\hu Z}_2$-graduation of ${\frak L}$). A  $\mathbb{Z}_2$-graded subspace $A = A^{\bar  0} \oplus A^{\bar  1}$ of ${\frak L}$  is called a {\it subsuperalgebra}  of ${\frak L}$ if $[A,A] \subset A$.  A {\it (graded) ideal} $\mathcal{I}$ of ${\frak L}$ is a subsuperalgebra of ${\frak L}$ such that $[\mathcal{I},{\frak L}] + [{\frak L},\mathcal{I}] \subset \mathcal{I}.$ A (graded) ideal $\mathcal{I}$ of ${\frak L}$ splits as
\begin{equation}\label{idealpartio}
\hbox{$\mathcal{I} = \bigoplus\limits_{a \in I}\mathcal{I}_a = \bigoplus\limits_{a \in I}(\mathcal{I}_a^{\bar 0} \oplus \mathcal{I}_a^{\bar 1})$ with any $\mathcal{I}_a^{\bar i} := \mathcal{I}_a \cap {\frak L}^{\bar i}, {\bar i} \in {\hu Z}_2$.}
\end{equation}

The (graded) ideal ${\frak I}$ generated by
\begin{equation}\label{equi3}
\Bigl\{ [x,y] + (-1)^{{\bar i}{\bar j}}[y,x] : x \in {\frak L}^{\bar i}, y \in {\frak L}^{\bar j}, \bar i, \bar j \in {\hu Z}_2 \Bigr\}
\end{equation}
plays an important role in the theory since it determines the (possible) non-super Lie cha\-racter of ${\frak L}$. From definition of ideal $[{\frak I}, {\frak L}] \subset {\frak I}$ and from Super Leibniz identity, it is straightforward to check that this ideal satisfies
\begin{equation}\label{equi}
[{\frak L},{\frak I}] = 0.
\end{equation}

 Here we note that the usual definition of simple superalgebra lacks of interest in the case of Leibniz superalgebras because would imply the ideal ${\frak I}={\frak L}$ or ${\frak I}={0}$, being so ${\frak L}$ an abelian (product zero)  or a Lie superalgebra, respectively (see Equation (\ref{equi})). Abdykassymova and Dzhumadil'daev introduced in \cite{Abdy, Dzhu} an  adequate definition in the case of Leibniz algebras $(L,[\cdot, \cdot])$ by calling simple to the ones such that its only ideals are $\{0\}$, $L$ and the one generated by the set $\{[x,x]: x \in L\}$.  Following this vain, we consider the next definition.

\begin{definition}\label{Defsimple}\rm
A set-graded Leibniz superalgebra ${\frak L}$ is called {\it simple} if $[{\frak L},{\frak L}] \neq 0$ and its only (graded) ideals are $\{0\}, {\frak I}$ and ${\frak L}$.
\end{definition}

\noindent Observe that from the grading of ${\frak L}$ and Equation
\eqref{separa} we get, for any $a,b \in I,$ and  ${\bar i}, {\bar j} \in {\hu Z}_2$, $$[{\frak L}_a^{\bar i}, {\frak L}_b^{\bar j}] \subset {\frak L}_c^{{\bar i}+{\bar j}}$$  with some (unique) $c \in I$. We call the {\it support} of the set grading to the set $$\frak{S} := \bigl\{ a \in I :  {\frak L}_a \neq 0 \bigr\}.$$ We also denote by $\frak{S}^{\bar 0} := \bigl\{ a \in I : {\frak L}_a^{\bar 0} \neq 0 \bigr\}$ and by $\frak{S}^{\bar 1} := \bigl\{ a \in I : {\frak L}_a^{\bar 1} \neq 0 \bigr\}$. So $\frak{S} = \frak{S}^{\bar 0} \cup \frak{S}^{\bar 1}$,
being a non necessarily disjoint union.


\section{Connections in the support. Decompositions}

We begin this section by developing, as the main tool, connections techniques in the support of a set-graded Leibniz superalgebras. In the paper, ${\frak L} = \bigoplus_{i \in \frak{S}}({\frak L}_i^{\bar 0} \oplus {\frak L}_i^{\bar 1})$ is an arbitrary set-graded Leibniz superalgebra and the set $\frak S$ the support of the set grading. For each $a \in \frak{S}$, a new variable $\tilde{a} \notin  \frak{S}$ is introduced and we denote by
$$\tilde{\frak{S}} := \bigl\{ \tilde{a} : a \in \frak{S} \bigr\}$$
the set consisting of all these new symbols. For any $ \tilde{a}
\in \tilde{\frak{S}}$,  we denote
$$\widetilde{(\tilde{a})}:=a.$$
We will denote by $\mathcal{P}(A)$ the power set of a given set $A.$ Next, we consider the following operation, $$\star : (\frak{S} \dot{\cup} \tilde{\frak{S}}) \times (\frak{S} \dot{\cup} \tilde{\frak{S}}) \to \mathcal{P}(\frak S)$$ such as:

\begin{itemize}
\item for $a,b \in \frak{S},$

$$\begin{array}{l}
a \star b :=\left\{
\begin{array}{cll}
\emptyset &\text{if}& [{\frak L}_a,{\frak L}_b] = 0,\\
\{c\} &\text{if} & 0 \neq [{\frak L}_a, {\frak L}_b] \subset {\frak L}_c,
\end{array}
\right.
\end{array}$$

\item for $a \in \frak{S}$ and $\tilde{b} \in \tilde{\frak{S}},$

$$a \star \tilde{b} = \tilde{b} \star a := \{c \in \frak{S} : 0 \neq [{\frak L}_c, {\frak L}_b] \subset {\frak L}_a\},$$

\item for $\tilde{a}, \tilde{b} \in \tilde{\frak{S}},$

$$\tilde{a} \star \tilde{b} := \emptyset.$$
\end{itemize}

Given any subset $\emptyset \neq \mathcal{U}$ of $\frak{S} \cup \tilde{\frak{S}},$ we write $\tilde{\mathcal{U}} := \{\tilde{a} : a \in \mathcal{U}\}$, and also $\tilde{\emptyset} := \emptyset$.

At this moment we have to note that sometimes it is interesting to distinguish one element $\mathfrak{o}$ in the support of the grading, because the homogeneous space ${\frak L}_{\mathfrak{o}}$ has, in a sense, a special behavior to the remaining elements in the set of homogeneous spaces ${\frak L}_a$, for $a \in \frak{S}$. This is for instance the case in which the grading set $I$ is an abelian group, where the homogeneous space $\frak{L}_0$ associated to the unit element $0$ in the group enjoys a distinguished role. Indeed, if we consider the group grading determined by the Cartan decomposition of a semisimple finite-dimensional Lie algebra, the homogeneous space associated to the unit element agrees with the Cartan subalgebra $H$. That is ${\frak L}_0 = H$, being then $\dim({\frak L}_g) = 1$ for any $g$ in the support of the grading up to $\dim({\frak L}_0)$ which is not bounded by this condition. The same phenomenon happens for the more general case of locally finite split Lie algebras and, in general, for group-graded Lie algebras of maximal length (see \cite{graalg, Stumme}). From here, we are going to feel free in our study to distinguish one special element $\mathfrak{o}$ in the support of the grading. Hence, let us now fix an element $\mathfrak{o}$ such that either $\mathfrak{o} \in \frak{S}$ satisfying the property $\mathfrak{o} \star a \neq \{\mathfrak{o}\}$ for any $a \in \frak{S} \setminus \{\mathfrak{o}\}$, or $\mathfrak{o} = \emptyset$. Note that the possibility $\mathfrak{o} = \emptyset$ holds for the case in which it is not wished to distinguish any element in $\frak S$.
Finally, we need to introduce the following mapping:

$\phi: \mathcal{P}((\frak{S} \dot{\cup} \tilde{\frak{S}}) \setminus \{\mathfrak{o},\tilde{\mathfrak{o}}\}) \times (\frak{S} \dot{\cup} \tilde{\frak S}) \to \mathcal{P}((\frak{S} \dot{\cup} \tilde{\frak{S}}) \setminus \{\mathfrak{o},\tilde{\mathfrak{o}}\})$ defined as
\begin{itemize}
\item $\phi(\emptyset, \frak{S} \dot{\cup} \tilde{\frak{S}}) = \emptyset,$
\item for any $\emptyset \neq \mathcal{U} \in \mathcal{P}((\frak{S} \dot{\cup} \tilde{\frak{S}}) \setminus \{\mathfrak{o},\tilde{\mathfrak{o}}\})$ and $r \in \frak{S} \dot{\cup} \tilde{\frak{S}},$   $$\phi(\mathcal{U},r) := \Bigl( \Bigl(\bigcup_{x \in \mathcal{U}}(x \star r)\Bigr) \setminus       \{ \mathfrak{o} \} \Bigr) \cup \reallywidetilde{ \Bigl(\Bigl( \bigcup_{x \in \mathcal{U}} (x \star r) \Bigr) \setminus \{ \mathfrak{o} \}\Bigr)}.$$
\end{itemize}

\noindent Note that for any $\mathcal{U} \in \mathcal{P}((\frak{S} \dot{\cup} \tilde{\frak{S}}) \setminus \{\mathfrak{o},\tilde{\mathfrak{o}}\})$ and $r \in \frak{S} \dot{\cup} \tilde{\frak{S}}$ we get that
\begin{equation}\label{igualdad}
\phi(\mathcal{U},r) = \widetilde{\phi(\mathcal{U},r)}
\end{equation}
and $$\phi(\mathcal{U},r) \cap \frak{S} = \Bigl( \bigcup_{x \in \mathcal{U}} (x \star r) \Bigr) \setminus \{\mathfrak{o}\}.$$
Also observe that for any $a \in \frak{S}$ and $r \in \frak{S} \dot{\cup} \tilde{\frak{S}}$ we have that:
\begin{itemize}
\item $a \in x \star r$ for some $x \in \frak{S}$ if and only if $x \in a \star \tilde{r},$
\item while $a \in c \star r$ for some $c \in \tilde{\frak{S}}$ if and only if $\tilde{c} \in \tilde{a} \star r$.
\end{itemize}
 These facts together with Equation \eqref{igualdad} imply that for any $\mathcal{U} \in \mathcal{P}((\frak{S} \dot{\cup} \tilde{\frak{S}}) \setminus \{\mathfrak{o},\tilde{\mathfrak{o}}\})$ such that $\mathcal{U} = \tilde{\mathcal{U}}$ and $r \in \frak{S} \dot{\cup} \tilde{\frak{S}}$ we have

$$\hbox{$a \in \phi(\mathcal{U},r) \cap \frak{S}$ if and only if $a \in \frak{S}$ and}$$
\begin{equation}\label{Ecuacion2}
\hbox{either $\phi(\{a\},\tilde{r}) \cap \mathcal{U} \cap \frak{S} \neq \emptyset$ or $\phi(\{\tilde{a}\},r) \cap \mathcal{U} \cap \frak{S} \neq \emptyset.$}
\end{equation}

\begin{definition}\label{con}\rm
Let $a,b \in \frak{S} \setminus \{\mathfrak{o}\}.$ We say that
$a$ is {\em connected} to $b$ if there exists $$\{r_1,r_2,\dots,r_n\} \subset \frak{S} \dot{\cup} \tilde{\frak{S}}$$ such that\\
If $n=1:$
\begin{enumerate}
\item[{\rm 1.}] $r_1 = a = b.$
\end{enumerate}
If $n \geq  2:$
\begin{enumerate}
\item[{\rm 1.}] $r_1 \in \{a,\tilde{a}\}.$
\item[{\rm 2.}] $\phi(\{r_1\}, r_2) \neq \emptyset,$\\
$\phi(\phi(\{r_1\}, r_2),r_3) \neq \emptyset,$\\
$\hspace*{1.6cm} \vdots$\\
$\phi(\phi(\dots \phi(\{r_1\}, r_2),\dots ,r_{n-2}),r_{n-1}) \neq \emptyset.$\\
\item[{\rm 3.}] $b \in \phi(\phi(\dots \phi(\{r_1\}, r_2),\dots ,r_{n-1}),r_n).$\\
\end{enumerate}
We say that $\{r_1,r_2,\dots,r_n\}$ is a {\em connection} from $a$ to $b$.
\end{definition}

\noindent The proof of the next result is analogous to the proof of \cite[Proposition 2.1]{Origen1}.

\begin{proposition}\label{pro1}
The relation $\sim$ in $\frak{S} \setminus \{\mathfrak{o}\}$, defined by $a \sim b$ if and only if $a$ is connected to $b$, is of equivalence.
\end{proposition}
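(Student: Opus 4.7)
The plan is to verify on $\frak{S}\setminus\{\mathfrak{o}\}$ the three axioms---reflexivity, transitivity and symmetry---of the relation $\sim$. Since $\sim$ depends only on the combinatorial operations $\star$ and $\phi$ and not on the super-bracket beyond what is already encoded in $\star$, the argument is essentially formal and I would follow the strategy of \cite[Proposition 2.1]{Origen1}. Reflexivity is immediate from the length-one case of Definition~\ref{con}: the singleton chain $\{a\}$ is itself a connection from $a$ to $a$.

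For transitivity I would concatenate chains. Given connections $\{r_1,\ldots,r_n\}$ from $a$ to $b$ and $\{s_1,\ldots,s_m\}$ from $b$ to $c$, the only nontrivial case is $n,m\ge 2$. The key observation is that $\phi(\cdot,r)$ is monotone in its first argument, because its definition consists of a union over $x\in\mathcal U$. By Equation~\eqref{igualdad} the iterated image
$$\mathcal U_n:=\phi(\phi(\cdots\phi(\{r_1\},r_2)\cdots,r_{n-1}),r_n)$$
is tilde-stable and contains $b$, hence also $\tilde b$, and therefore $s_1\in\{b,\tilde b\}$. Monotonicity then forces $\phi(\cdots\phi(\mathcal U_n,s_2)\cdots,s_m)$ to contain $\phi(\cdots\phi(\{s_1\},s_2)\cdots,s_m)\ni c$, so the concatenation $\{r_1,r_2,\ldots,r_n,s_2,\ldots,s_m\}$ is a connection from $a$ to $c$.

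Symmetry will be the main obstacle, and I would handle it by reversing the chain. Given a connection $\{r_1,r_2,\ldots,r_n\}$ from $a$ to $b$ with $n\ge 2$, the essential tool is Equation~\eqref{Ecuacion2}: for any tilde-stable set $\mathcal U=\tilde{\mathcal U}$ and any $x\in\phi(\mathcal U,r)\cap\frak{S}$, either $\phi(\{x\},\tilde r)$ or $\phi(\{\tilde x\},r)$ meets $\mathcal U\cap\frak{S}$. Starting from $b$ in the final image $\mathcal U_n=\phi(\mathcal U_{n-1},r_n)$, this dichotomy picks a first term $r_1'\in\{b,\tilde b\}$ and a second term $r_2'\in\{r_n,\tilde r_n\}$ such that $\phi(\{r_1'\},r_2')$ meets $\mathcal U_{n-1}\cap\frak{S}$. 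Iterating the same dichotomy down the chain, and exploiting at every step the tilde-stability~\eqref{igualdad} of each intermediate image so that the hypothesis of~\eqref{Ecuacion2} remains satisfied, I obtain a reversed chain whose successive $\phi$-images are nonempty and whose final image contains $a$. The distinguished element $\mathfrak{o}$ plays no role, since $\phi$ removes it at every step; all connections and their reversals stay inside $\frak{S}\setminus\{\mathfrak{o}\}$.
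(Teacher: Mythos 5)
Your proposal is correct and follows exactly the route the paper itself takes: the paper gives no proof of this proposition, deferring to \cite[Proposition 2.1]{Origen1}, and your reflexivity/concatenation/chain-reversal argument based on monotonicity of $\phi$ in its first argument together with \eqref{igualdad} and \eqref{Ecuacion2} is precisely that argument. The only detail worth writing out explicitly is the final step of the reversal, where $\mathcal{U}_1=\{r_1\}$ is not tilde-stable so \eqref{Ecuacion2} does not directly apply; enlarging to $\{a,\tilde a\}\supset\{r_1\}$ (again by monotonicity) restores the hypothesis and lands the reversed chain on $a$.
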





Given $a \in \frak{S} \setminus \{\mathfrak{o}\}$, we denote by
$$[a] := \Bigl\{b \in \frak{S} \setminus \{\mathfrak{o}\} : b \hspace{0.1cm} {\rm is} \hspace{0.1cm} {\rm connected} \hspace{0.1cm} {\rm to} \hspace{0.1cm} a \Bigr\}.$$
\noindent By Proposition \ref{pro1} if $t \notin [a]$ then $[a] \cap [t] = \emptyset$.
Our next goal in this section is to associate an (adequate) graded ideal ${\frak L}_{[a]}$ to any $[a]$, with $a \in \frak{S} \setminus \{\mathfrak{o}\}$. For $[a]$, we define the set
$${\frak L}_{[a],\mathfrak{o}} := span_{\hu K} \Bigl\{[{\frak L}_b,{\frak L}_c] : b,c \in [a]\Bigr\} \cap {\frak L}_{\mathfrak{o}} =$$
\begin{equation}\label{suma16}
\Bigl(\sum\limits_{b,c \in [a]}([{\frak L}_b^{\bar 0},{\frak
L}_c^{\bar 0}] + [{\frak L}_b^{\bar 1},{\frak L}_c^{\bar 1}]) \oplus \sum\limits_{b,c \in [a]} ([{\frak L}_b^{\bar 0},{\frak L}_c^{\bar 1}] + [{\frak L}_b^{\bar 1},{\frak L}_c^{\bar 0}])\Bigr) \cap {\frak L}_{\mathfrak{o}}
\end{equation}
$$ \subset {\frak L}_{\mathfrak{o}}^{\bar 0} \oplus {\frak L}_{\mathfrak{o}}^{\bar 1},$$ last equality being consequence of Equation \eqref{separa}, where ${\frak L}_{\mathfrak{o}} := \{0\}$ whence $\mathfrak{o} = \emptyset.$ We also consider $$V_{[a]} := \bigoplus\limits_{b \in [a]}{\frak L}_b = \bigoplus\limits_{b \in [a]}\Bigl({\frak L}_b^{\bar 0} \oplus {\frak L}_b^{\bar 1}\Bigr).$$ Finally, we denote by ${\frak L}_{[a]}$ the following (graded) subspace of ${\frak L}$,
$${\frak L}_{[a]} := {\frak L}_{[a],\mathfrak{o}} \oplus V_{[a]}.$$

\begin{proposition}\label{pro_ideal}
For any $a \in \frak{S} \setminus \{\mathfrak{o}\}$, the (linear) subspace ${\frak L}_{[a]}$ is a Leibniz subsuperalgebra of ${\frak L}$.
\end{proposition}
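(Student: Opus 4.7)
The plan is to show $[\mathfrak{L}_{[a]}, \mathfrak{L}_{[a]}] \subset \mathfrak{L}_{[a]}$ by decomposing $\mathfrak{L}_{[a]} = V_{[a]} \oplus \mathfrak{L}_{[a],\mathfrak{o}}$ and checking the inclusion on each of the four bracket pieces that appear. The main tools will be the graded structure of $\mathfrak{L}$, the two-element connections $\{b,c\}$ supplied by the $\phi$-map, and the super Leibniz identity, which allows one to reduce iterated brackets to inner brackets of two elements of $V_{[a]}$.

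I will start with $[V_{[a]}, V_{[a]}]$. Fixing $b,c \in [a]$, the bracket $[\mathfrak{L}_b, \mathfrak{L}_c]$ is zero or contained in a unique $\mathfrak{L}_d$ with $d \in b \star c$. If $d = \mathfrak{o}$, then $[\mathfrak{L}_b, \mathfrak{L}_c] \subset \mathfrak{L}_{[a],\mathfrak{o}}$ directly by \eqref{suma16}; if $d \neq \mathfrak{o}$, then $d \in \phi(\{b\},c) \cap \mathfrak{S}$, so $\{b,c\}$ is a two-element connection from $b$ to $d$ and $d \sim b \sim a$, giving $\mathfrak{L}_d \subset V_{[a]}$. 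Either way the bracket lies in $\mathfrak{L}_{[a]}$.

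Next I would handle the mixed cases $[\mathfrak{L}_{[a],\mathfrak{o}}, V_{[a]}]$ and $[V_{[a]}, \mathfrak{L}_{[a],\mathfrak{o}}]$ by working with generators. An element of $\mathfrak{L}_{[a],\mathfrak{o}}$ is a sum of brackets $[x,y]$ with $x \in \mathfrak{L}_b^{\bar i}$, $y \in \mathfrak{L}_e^{\bar j}$, $b,e \in [a]$ and $[x,y] \in \mathfrak{L}_\mathfrak{o}$; taking $z \in \mathfrak{L}_c^{\bar k}$ with $c \in [a]$, super Leibniz gives
\[
[[x,y],z] = [x,[y,z]] + (-1)^{\bar{j}\bar{k}}[[x,z],y].
\]
The inner brackets $[y,z]$ and $[x,z]$ are of type $[V_{[a]}, V_{[a]}]$ and so, by the previous step, each lies in $V_{[a]}$ or in $\mathfrak{L}_{[a],\mathfrak{o}}$. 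When such an inner bracket lies in $V_{[a]}$, the corresponding outer bracket is again $[V_{[a]}, V_{[a]}]$ and is covered; when it lies in $\mathfrak{L}_{[a],\mathfrak{o}} \subset \mathfrak{L}_\mathfrak{o}$, the outer bracket becomes $[V_{[a]}, \mathfrak{L}_\mathfrak{o}]$ or $[\mathfrak{L}_\mathfrak{o}, V_{[a]}]$, and the special property $\mathfrak{o} \star a \neq \{\mathfrak{o}\}$ for $a \neq \mathfrak{o}$ guarantees the image index is $\neq \mathfrak{o}$; a two-element connection of the form $\{b, \mathfrak{o}\}$ then identifies this index as an element of $[a]$. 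The case $[V_{[a]}, \mathfrak{L}_{[a],\mathfrak{o}}]$ is symmetric, working from the expansion $[z,[x,y]] = [[z,x],y] - (-1)^{\bar{i}\bar{j}}[[z,y],x]$.

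Finally, $[\mathfrak{L}_{[a],\mathfrak{o}}, \mathfrak{L}_{[a],\mathfrak{o}}]$ reduces to the previous cases by applying super Leibniz to $[[x_1,y_1], [x_2,y_2]]$, splitting it into triple iterated brackets whose inner factors lie in $V_{[a]}$; each such triple is either of type $[V_{[a]}, V_{[a]}]$ or of one of the mixed types already verified. The principal obstacle is the mixed case: when an inner bracket $[y,z]$ or $[x,z]$ lands in $\mathfrak{L}_{[a],\mathfrak{o}}$, the outer bracket recurses as $[V_{[a]}, \mathfrak{L}_{[a],\mathfrak{o}}]$ or $[\mathfrak{L}_{[a],\mathfrak{o}}, V_{[a]}]$; resolving this circularity will require combining the super-Leibniz rewriting with the special property of $\mathfrak{o}$ to force any index landing outside $[a] \cup \{\mathfrak{o}\}$ to give a vanishing contribution, or else to locate it inside $[a]$ via a suitable two-element connection through $\mathfrak{o}$.
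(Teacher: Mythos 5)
Your proposal is correct and follows the same skeleton as the paper's proof: the same four-summand decomposition of $[{\frak L}_{[a]},{\frak L}_{[a]}]$, two-element connections to absorb $[V_{[a]},V_{[a]}]$ into $V_{[a]}\oplus{\frak L}_{[a],\mathfrak{o}}$, and the super Leibniz identity to reduce $[{\frak L}_{[a],\mathfrak{o}},{\frak L}_{[a],\mathfrak{o}}]$ to the other cases. The one place you diverge is the treatment of the mixed summands, and there you make the argument harder than it needs to be: expanding a generator of ${\frak L}_{[a],\mathfrak{o}}$ as $[x,y]$ and applying super Leibniz to $[[x,y],z]$ is precisely what creates the ``circularity'' you flag at the end as the principal obstacle. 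The paper never expands: it simply uses ${\frak L}_{[a],\mathfrak{o}}\subset{\frak L}_{\mathfrak{o}}$, so that $[{\frak L}_{[a],\mathfrak{o}},{\frak L}_c]\subset[{\frak L}_{\mathfrak{o}},{\frak L}_c]\subset{\frak L}_d$ with $d=\mathfrak{o}\star c$, which is different from $\mathfrak{o}$ by the defining property of the distinguished element and lies in $[a]$ by a two-element connection through $\mathfrak{o}$ --- exactly the fallback you yourself invoke in the sub-case where the inner bracket lands in ${\frak L}_{\mathfrak{o}}$. Since that fallback already establishes $[{\frak L}_{\mathfrak{o}},V_{[a]}]+[V_{[a]},{\frak L}_{\mathfrak{o}}]\subset V_{[a]}$ outright, your super-Leibniz detour for the mixed terms is redundant and the obstacle you describe dissolves; once you promote the fallback to the main argument for those two summands, your proof coincides with the paper's.
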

\begin{proof}
We have to check that ${\frak L}_{[a]}$ satisfies $[{\frak L}_{[a]},{\frak L}_{[a]}] \subset {\frak L}_{[a]}$. Taking into account the expression of $\mathfrak{L}_{[a]}$ and the bilinearity of the product, we have
\begin{equation*}
[{\frak L}_{[a]}, {\frak L}_{[a]}] = [{\frak L}_{[a],\mathfrak{o}} \oplus V_{[a]}, {\frak L}_{[a],\mathfrak{o}} \oplus V_{[a]}]
\end{equation*}
\begin{equation}\label{cuatro2}
\subset [{\frak L}_{[a],\mathfrak{o}}, {\frak L}_{[a],\mathfrak{o}}] + [{\frak L}_{[a],\mathfrak{o}}, V_{[a]}] + [V_{[a]}, {\frak L}_{[a],\mathfrak{o}}]
+ [V_{[a]}, V_{[a]}].
\end{equation}

Consider the above second summand $[{\frak L}_{[a],\frak{o}}, V_{[a]}]$. Taking into account ${\frak L}_{[a],\frak{o}} \subset {\frak L}_{\frak{o}}$ and $[{\frak L}_{\frak o},{\frak L}_a] \subset {\frak L}_b$ for $\frak{o} \star a = b \in \frak{S} \setminus \{\mathfrak{o}\}$, the connection $\{b,\tilde{\frak{o}}\}$ gives $b \sim a$ and we have $[{\frak L}_{[a],{\frak o}}, V_{[a]}] \subset V_{[a]}$. In a similar way $[V_{[a]},{\frak L}_{[a],\frak{o}}] \subset V_{[a]}$ and so
\begin{equation}\label{diez2}
[{\frak L}_{[a],\frak{o}}, \oplus V_{[a]}] + [V_{[a]},{\frak L}_{[a],\frak{o}}] \subset V_{[a]}.
\end{equation}

Let us consider now the fourth summand $[V_{[a]}, V_{[a]}]$ in Equation \eqref{cuatro2}. Suppose there exist $b,c \in [a]$ such that $[{\frak L}_b,{\frak L}_c] \neq 0$. So $b \star c =d \in \frak{S}.$ If $d = \frak{o}$ then  clearly $[{\frak L}_b,{\frak L}_c]  \subset {\frak L}_\frak{o}$ and from Equation \eqref{suma16} we conclude  $[{\frak L}_b,{\frak L}_c] \subset {\frak L}_{[a],\frak{o}} $. Otherwise, if $d \in  \frak{S} \setminus \{\frak{o}\}$, using the connection $\{d, \tilde{c}\}$ then $b$ is connected to $d$  and we conclude $d \in [a]$. Hence, $[{\frak L}_b,{\frak L}_c] \subset {\frak L}_{d} \subset V_{[a]}$. In conclusion
\begin{equation}\label{nueve2}
[V_{[a]}, V_{[a]}] \subset {\frak L}_{[a]}.
\end{equation}

Finally, we consider the first summand $[{\frak L}_{[a],\frak{o}},{\frak L}_{[a],\frak{o}}]$ in \eqref{cuatro2}. Suppose now there exist $b,c, b',c' \in [a]$ with $b \star c = \frak{o}$ and $b' \star c' = \frak{o}$ such that $\Bigl[[{\frak L}_b, {\frak L}_c], [{\frak L}_{b'},{\frak L}_{c'}] \Bigr] \neq 0.$
Requiring to the expressions with $\mathbb{Z}_2$-graduation we have
$$\Bigl[ \sum\limits_{b,c \in [a]}[{\frak L}_b, {\frak L}_c],\sum\limits_{b',c' \in [a]} [{\frak L}_{b'},{\frak L}_{c'}] \Bigr] \subset \sum\limits_{{\tiny \begin{array}{c}
b,c ,b',c' \in [a]\\
\bar{i},\bar{j},\bar{k},\bar{l} \in \mathbb{Z}_2 \end{array}}}\Bigl[[{\frak L}_b^{\bar i},{\frak L}_c^{\bar j}], [{\frak L}_{b'}^{\bar k},{\frak L}_{c'}^{\bar l}] \Bigr].$$
By Super Leibniz identity we get
\begin{eqnarray*}
&& \sum\limits_{{\tiny \begin{array}{c}
b,c ,b',c' \in [a]\\
\bar{i},\bar{j},\bar{k},\bar{l} \in \mathbb{Z}_2 \end{array}}} \Bigr[[{\frak L}_b^{\bar i},{\frak L}_c^{\bar j}], [{\frak L}_{b'}^{\bar k},{\frak L}_{c'}^{\bar l}] \Bigl] \\
&& \hspace*{1cm} \subset  \sum\limits_{{\tiny \begin{array}{c}
b,c ,b',c' \in [a]\\
\bar{i},\bar{j},\bar{k},\bar{l} \in \mathbb{Z}_2 \end{array}}} \Bigl( \Bigl[[[{\frak L}_b^{\bar i}, {\frak L}_c^{\bar j}],  {\frak L}_{b'}^{\bar k}],{\frak L}_{c'}^{\bar l}\Bigl] + \Bigr[[[{\frak L}_b^{\bar i},{\frak L}_c^{\bar j}], {\frak L}_{c'}^{\bar l} ], {\frak L}_{b'}^{\bar k}\Bigl]\Bigr)\\
&& \hspace*{1cm} \subset  \sum\limits_{{\tiny \begin{array}{c}
b,c ,b',c' \in [a]\\
\bar{i},\bar{j},\bar{k},\bar{l} \in \mathbb{Z}_2 \end{array}}} \Bigl( \Bigl[[ {\frak L}_\frak{o}^{\bar i+\bar j} ,  {\frak L}_{b'}^{\bar k}],{\frak L}_{c'}^{\bar l}\Bigl] + \Bigr[[ {\frak L}_\frak{o}^{\bar i+\bar j}, {\frak L}_{c'}^{\bar l} ], {\frak L}_{b'}^{\bar k}\Bigl]\Bigr) .
\end{eqnarray*}
 If $[ {\frak L}_\frak{o}^{\bar i+\bar j} ,  {\frak L}_{b'}^{\bar k}] \neq 0$, then $[ {\frak L}_\frak{o}^{\bar i+\bar j} ,  {\frak L}_{b'}^{\bar k}] \subset
{\frak L}_d^{\bar i+\bar j+\bar k}$, with $\frak{o} \star b' = d \in \frak{S} \setminus \{\mathfrak{o}\}$. Then
 $\{d,\tilde{\frak{o}}\}$ is a connection from $d$ to $b'$, so $d \in [a]$. We proceed similarly with $[{\frak L}_\frak{o}^{\bar i+\bar j}, {\frak L}_{c'}^{\bar l}]$  to get
\begin{equation}\label{33}
[{\frak L}_{[a],\frak{o}} ,{\frak L}_{[a],\frak{o}}] \subset
{\frak L}_{[a]}.
\end{equation}
From Equations \eqref{cuatro2}-\eqref{33} we have $[{\frak
L}_{[a]},{\frak L}_{[a]}] \subset {\frak L}_{[a]}$.
\end{proof}

\begin{proposition}\label{pro_prozero}
For any $[a] \neq [t]$ we have $[{\frak
L}_{[a]},{\frak L}_{[t]}] = 0.$
\end{proposition}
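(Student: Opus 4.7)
The plan is to imitate the four-summand expansion used in the proof of Proposition 2.2. Decomposing $\mathfrak{L}_{[a]} = \mathfrak{L}_{[a],\mathfrak{o}} \oplus V_{[a]}$ (and analogously for $[t]$), I would write
\begin{equation*}
[\mathfrak{L}_{[a]}, \mathfrak{L}_{[t]}] \subset [\mathfrak{L}_{[a],\mathfrak{o}}, \mathfrak{L}_{[t],\mathfrak{o}}] + [\mathfrak{L}_{[a],\mathfrak{o}}, V_{[t]}] + [V_{[a]}, \mathfrak{L}_{[t],\mathfrak{o}}] + [V_{[a]}, V_{[t]}]
\end{equation*}
and kill each summand in turn. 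The cornerstone is $[V_{[a]},V_{[t]}]=0$; the other three summands will then reduce to it via the super Leibniz identity.

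For $[V_{[a]}, V_{[t]}] = 0$, I would fix $b \in [a]$ and $c \in [t]$ with $[\mathfrak{L}_b, \mathfrak{L}_c] \neq 0$ and seek a contradiction. Then $b \star c = \{d\}$ for some $d \in \mathfrak{S}$. In the principal case $d \neq \mathfrak{o}$, the inclusion $0 \neq [\mathfrak{L}_b, \mathfrak{L}_c] \subset \mathfrak{L}_d$ reads, by the definition of $\star$ on $\mathfrak{S} \times \tilde{\mathfrak{S}}$, as $b \in d \star \tilde{c}$, whence $b \in \phi(\{\tilde{c}\}, d)$. Taking $r_1 = \tilde{c}$ and $r_2 = d$, the set $\{\tilde{c}, d\}$ is therefore a connection from $c$ to $b$ (this is a mirror image of the step used in Proposition 2.2 to force $d \in [a]$). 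Hence $c \sim b$, giving $c \in [a] \cap [t]$ and contradicting $[a] \neq [t]$. The residual case $d = \mathfrak{o}$ must be closed by invoking the distinguishing hypothesis $\mathfrak{o} \star e \neq \{\mathfrak{o}\}$ for every $e \in \mathfrak{S} \setminus \{\mathfrak{o}\}$: a hypothetical non-zero $[\mathfrak{L}_b, \mathfrak{L}_c] \subset \mathfrak{L}_\mathfrak{o}$ must then act along a non-$\mathfrak{o}$ direction on some homogeneous component, and super Leibniz converts this into a cross-class bracket to which the previous connection argument re-applies.

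For the two mixed summands I would write a generator of $\mathfrak{L}_{[t],\mathfrak{o}}$ as a bracket $[u, v]$ with $u \in \mathfrak{L}_{b'}^{\bar{k}}$, $v \in \mathfrak{L}_{c'}^{\bar{\ell}}$ and $b', c' \in [t]$, and pair it with $w \in \mathfrak{L}_b^{\bar{i}}$, $b \in [a]$. Super Leibniz then gives
\begin{equation*}
[w, [u, v]] = [[w, u], v] - (-1)^{\bar{k}\bar{\ell}}[[w, v], u],
\end{equation*}
in which both inner brackets $[w, u]$, $[w, v]$ lie in $[V_{[a]}, V_{[t]}]$, already shown to vanish; hence $[V_{[a]}, \mathfrak{L}_{[t],\mathfrak{o}}] = 0$, and symmetrically $[\mathfrak{L}_{[a],\mathfrak{o}}, V_{[t]}] = 0$. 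For the final summand $[\mathfrak{L}_{[a],\mathfrak{o}}, \mathfrak{L}_{[t],\mathfrak{o}}]$, expanding the left factor analogously as $[u', v']$ with $u', v' \in V_{[a]}$ and applying super Leibniz once more reduces it to linear combinations of the two mixed summands already handled.

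The expected main obstacle is precisely the case $d = \mathfrak{o}$: since $\phi$ explicitly discards $\mathfrak{o}$ from its outputs, the naive connection $\{b, c\}$ produces $\phi(\{b\}, c) = \emptyset$ and yields no information. Bridging this gap forces one to leverage the asymmetric property of $\mathfrak{o}$ jointly with the super Leibniz identity, pushing a candidate non-zero product out of $\mathfrak{L}_\mathfrak{o}$ into a component where the connection machinery of the generic case takes over.
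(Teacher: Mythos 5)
Your four-summand decomposition, the reduction of the three summands involving $\mathfrak{L}_{[\cdot],\mathfrak{o}}$ to the vanishing of $[V_{[a]},V_{[t]}]$ via the super Leibniz identity, and your treatment of the generic case $b\star c=\{d\}$ with $d\neq\mathfrak{o}$ all follow the paper's argument; in that generic case your two-element connection $\{\tilde c,d\}$ from $c$ to $b$ is in fact stated more carefully than the paper's own three-element connection.

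The genuine gap is exactly where you predicted it, the case $b\star c=\{\mathfrak{o}\}$, and your proposed bridge does not close it. A nonzero product $[\mathfrak{L}_b,\mathfrak{L}_c]\subset\mathfrak{L}_{\mathfrak{o}}$ need not ``act along a non-$\mathfrak{o}$ direction'': nothing in the axioms or in the hypothesis $\mathfrak{o}\star e\neq\{\mathfrak{o}\}$ prevents $[\mathfrak{L}_{\mathfrak{o}},\mathfrak{L}]=[\mathfrak{L},\mathfrak{L}_{\mathfrak{o}}]=0$ while $[\mathfrak{L}_b,\mathfrak{L}_c]=\mathfrak{L}_{\mathfrak{o}}\neq 0$, and in that situation the super Leibniz identity yields only zeros and produces no cross-class bracket to feed back into the generic argument. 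The device that actually handles this case --- the one the paper uses in the proof of Theorem~\ref{teo1} when a product falls into $\mathfrak{L}_{\mathfrak{o}}$ --- is purely combinatorial and needs no Leibniz identity: from $0\neq[\mathfrak{L}_b,\mathfrak{L}_c]\subset\mathfrak{L}_{\mathfrak{o}}$ one reads off $b\in\mathfrak{o}\star\tilde c=\tilde c\star\mathfrak{o}$, hence $b\in\phi(\{\tilde c\},\mathfrak{o})$ (note $b\neq\mathfrak{o}$, so it survives the deletion of $\mathfrak{o}$), and $\{\tilde c,\mathfrak{o}\}$ is a connection from $c$ to $b$. The point you overlooked is that although $\phi$ discards $\mathfrak{o}$ from its \emph{outputs}, $\mathfrak{o}$ remains available as the second entry $r_2$ of a connection, and the reversed operation $\tilde c\star\mathfrak{o}$ recovers the non-$\mathfrak{o}$ index $b$. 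With this connection you reach $c\in[a]\cap[t]$ and the same contradiction as in your generic case; the rest of your proof then goes through unchanged.
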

\begin{proof}
We have $$[{\frak L}_{[a]}, {\frak L}_{[t]}] = [{\frak L}_{[a],\mathfrak{o}} \oplus V_{[a]}, {\frak L}_{[t],\mathfrak{o}} \oplus V_{[t]}] \subset $$
\begin{equation}\label{cuatro}
\subset [{\frak L}_{[a],\mathfrak{o}}, {\frak L}_{[t],\mathfrak{o}}] + [{\frak L}_{[a],\mathfrak{o}}, V_{[t]}] + [V_{[a]}, {\frak L}_{[t],\mathfrak{o}}]
+ [V_{[a]}, V_{[t]}].
\end{equation}

Consider the above fourth summand $[V_{[a]}, V_{[t]}]$ from (\ref{cuatro}) and suppose there exist $b \in [a]$ and $u \in [t]$ such that $[{\frak L}_b, {\frak L}_u] \neq 0$.   As necessarily $b \star u   \neq \emptyset,$   then
 $\{b, u, \tilde{b}\}$ is a connection from $b$ to $u$. By the transitivity of the connection relation we have $a \sim t$, a contradiction. Hence $[{\frak L}_b, {\frak L}_u] = 0$ and so
\begin{equation}\label{nueve}
[V_{[a]}, V_{[t]}] = 0.
\end{equation}

Consider now the third summand $[V_{[a]}, {\frak L}_{[t],\mathfrak{o}}]$  and suppose exist $b \in [a], u,v \in [t]$ such that  $u \star v = \mathfrak{o}$ and  $[{\frak L}_b, [{\frak L}_u, {\frak
L}_v]] \neq 0$. Hence, there exist $\bar{i}, \bar{j}, \bar{k} \in {\mathbb Z}_2$ such that
$$[{\frak L}_b^{\bar{i}}, [{\frak L}_u^{\bar{j}}, {\frak L}_v^{\bar{k}}]] \neq 0.$$ By Super Leibniz identity we get either $[{\frak L}_b^{\bar{i}}, {\frak L}_u^{\bar{j}}]
\neq 0$ or $[{\frak L}_b^{\bar{i}}, {\frak L}_v^{\bar{k}}] \neq 0$. From here,  in any case $[V_{[a]}, V_{[t]}] \neq 0$, which contradicts Equation (\ref{nueve}).
In a similar way, we show that  the second summand $[{\frak L}_{[a],\mathfrak{o}}, V_{[t]}]$ is zero.

Finally, consider the first summand $[{\frak L}_{[a],\mathfrak{o}}, {\frak L}_{[t],\mathfrak{o}}]$.  Suppose now there exist $b,c \in [a], u,v \in [t]$ such that $b \star c = \mathfrak{o}$ and $u \star v = \mathfrak{o}$,  verifying  $[[{\frak L}_b, {\frak L}_c], [{\frak L}_u, {\frak L}_v]]\neq 0$.
Requiring to the expression with $\mathbb{Z}_2$-graduation we have
$$
\Bigl[  [{\frak L}_b, {\frak L}_c],  [{\frak
L}_{u},{\frak L}_{v}] \Bigr] =  \sum\limits_{{\tiny \begin{array}{c}
\bar{i},\bar{j},\bar{k},\bar{l} \in \mathbb{Z}_2 \end{array}}}\Bigl[[{\frak L}_b^{\bar i},{\frak L}_c^{\bar j}], [{\frak L}_{u}^{\bar k},{\frak L}_{v}^{\bar l}] \Bigr].$$
 Taking now into account Super Leibniz identity we get
\begin{eqnarray*}
&&\hspace*{-1,5cm} \sum\limits_{{\tiny \begin{array}{c}
\bar{i},\bar{j},\bar{k},\bar{l} \in \mathbb{Z}_2 \end{array}}}\Bigl[[
{\frak L}_b^{\bar i},{\frak L}_c^{\bar j}], [{\frak L}_{u}^{\bar k},{\frak L}_{v}^{\bar l}
] \Bigr] \\
&\subset &    \sum\limits_{{\tiny \begin{array}{c}
\bar{i},\bar{j},\bar{k},\bar{l} \in \mathbb{Z}_2 \end{array}}} \Bigl( \Bigl[ [[{\frak L}_b^{\bar i},{\frak L}_c^{\bar j}], {\frak L}_{u}^{\bar k}], {\frak L}_{v}^{\bar l} \Bigr] +  \Bigl[ [ {\frak L}_b^{\bar i},{\frak L}_c^{\bar j} ],{\frak L}_{v}^{\bar l}   ], {\frak L}_{u}^{\bar k} \Bigl]\Bigr)\\
&\subset &    \sum\limits_{{\tiny \begin{array}{c}
\bar{i},\bar{j},\bar{k},\bar{l} \in \mathbb{Z}_2 \end{array}}} \Bigl( \Bigl[ [  {\frak L}_b^{\bar i},[{\frak L}_c^{\bar j} ,{\frak L}_{u}^{\bar k}]], {\frak L}_{v}^{\bar l} \Bigl] +  \Bigl[ [[ {\frak L}_b^{\bar i},{\frak L}_{u}^{\bar k} ],{\frak L}_c^{\bar j} ],{\frak L}_{v}^{\bar l}   ,  \Bigl]\Bigr) \bigcup\\
&& \sum\limits_{{\tiny \begin{array}{c}
\bar{i},\bar{j},\bar{k},\bar{l} \in \mathbb{Z}_2 \end{array}}} \Bigl( \Bigl[ [  {\frak L}_b^{\bar i},[{\frak L}_c^{\bar j} ,{\frak L}_{v}^{\bar l}]],{\frak L}_{u}^{\bar k} \Bigl] + \Bigl[ [ [ {\frak L}_b^{\bar i},{\frak L}_{v}^{\bar l}],{\frak L}_c^{\bar j} ], {\frak L}_{u}^{\bar k} \Bigl]\Bigr)\\.
\end{eqnarray*}
which contradicts (\ref{nueve}), so $[[{\frak L}_b, {\frak L}_c], [{\frak L}_u, {\frak L}_v]]= 0$.

By Equation (\ref{cuatro}) we conclude $[{\frak L}_{[a]},{\frak L}_{[t]}] = 0$ as required.
\end{proof}

Proposition \ref{pro_ideal} asserts that for any $a \in \frak{S} \setminus \{\mathfrak{o}\}, {\frak L}_{[a]}$ is a Leibniz subsuperalgebra of ${\frak L}$ that we call the Leibniz subsuperalgebra of ${\frak
L}$ {\it associated} to $[a]$.

\begin{theorem}\label{teo1}
The following assertions hold.
\begin{enumerate}
\item[{\rm 1.}]  For any $a \in \frak{S} \setminus \{\mathfrak{o}\}$, the Leibniz subsuperalgebra $${\frak L}_{[a]} = {\frak L}_{[a],\mathfrak{o}} \oplus V_{[a]}$$
of ${\frak L}$ associated to $[a]$ is an ideal of ${\frak L}$.
\item[{\rm 2.}] If ${\frak L}$ is simple, then there exists a connection between any two elements of $\frak{S} \setminus \{\mathfrak{o}\}$.
\end{enumerate}
\end{theorem}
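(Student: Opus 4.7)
For assertion 1, my plan is to decompose ${\frak L} = {\frak L}_{\mathfrak{o}} \oplus V_{[a]} \oplus \bigoplus_{[t]\neq[a]} V_{[t]}$ and verify $[{\frak L}_{[a]}, {\frak L}] + [{\frak L}, {\frak L}_{[a]}] \subset {\frak L}_{[a]}$ summand by summand. The self-products $[{\frak L}_{[a]}, V_{[a]}]$ and $[V_{[a]}, {\frak L}_{[a]}]$ stay inside ${\frak L}_{[a]}$ by Proposition \ref{pro_ideal}, while the cross-products $[{\frak L}_{[a]}, V_{[t]}]$ and $[V_{[t]}, {\frak L}_{[a]}]$ with $[t]\neq[a]$ vanish by Proposition \ref{pro_prozero} since $V_{[t]}\subset {\frak L}_{[t]}$. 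The remaining work is thus $[{\frak L}_{[a]}, {\frak L}_{\mathfrak{o}}] + [{\frak L}_{\mathfrak{o}}, {\frak L}_{[a]}]$, which I would split further into the $V_{[a]}$ and ${\frak L}_{[a],\mathfrak{o}}$ parts. For a summand $[{\frak L}_{\mathfrak{o}}, {\frak L}_b]$ with $b \in [a]$: if nonzero it lies in ${\frak L}_d$ with $\mathfrak{o}\star b = \{d\}$, and the distinguishing property of $\mathfrak{o}$ forces $d\neq\mathfrak{o}$; then the connection $\{b,\tilde{\mathfrak{o}}\}$ witnesses $b\sim d$, so ${\frak L}_d \subset V_{[a]}$. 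The symmetric summand $[{\frak L}_b,{\frak L}_{\mathfrak{o}}]$ is handled analogously via $\{b,\mathfrak{o}\}$. For the mixed summands involving ${\frak L}_{[a],\mathfrak{o}}$ paired with ${\frak L}_{\mathfrak{o}}$, I would apply Super Leibniz identity to move the outer bracket into the inner one and reduce to the previous cases, exactly as in the treatment of $[{\frak L}_{[a],\mathfrak{o}},{\frak L}_{[a],\mathfrak{o}}]$ in the last paragraph of the proof of Proposition \ref{pro_ideal}.

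For assertion 2, fix $a \in \frak{S}\setminus\{\mathfrak{o}\}$ and consider the graded ideal ${\frak L}_{[a]}$ given by assertion 1. It is nonzero because it contains ${\frak L}_a \neq 0$, so by Definition \ref{Defsimple} it equals either ${\frak I}$ or ${\frak L}$. If ${\frak L}_{[a]} = {\frak L}$, comparing the decomposition ${\frak L} = {\frak L}_{\mathfrak{o}} \oplus \bigoplus_{c \in \frak{S}\setminus\{\mathfrak{o}\}} {\frak L}_c$ with ${\frak L}_{[a]} = {\frak L}_{[a],\mathfrak{o}} \oplus V_{[a]}$ forces $V_{[a]} = \bigoplus_{c \in \frak{S}\setminus\{\mathfrak{o}\}} {\frak L}_c$, hence $[a] = \frak{S}\setminus\{\mathfrak{o}\}$. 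Otherwise ${\frak L}_{[a]} = {\frak I}$; for any $b \in \frak{S}\setminus\{\mathfrak{o}\}$ the same dichotomy applies to ${\frak L}_{[b]}$, and in either case $b \in [a]$: if ${\frak L}_{[b]} = {\frak L}$ the preceding argument gives $[b] = \frak{S}\setminus\{\mathfrak{o}\} \ni a$, while if ${\frak L}_{[b]} = {\frak I} = {\frak L}_{[a]}$ the same projection forces $V_{[a]} = V_{[b]}$ and hence $[a] = [b]$. The transitivity in Proposition \ref{pro1} then yields a connection between any two elements of $\frak{S}\setminus\{\mathfrak{o}\}$.

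The step I expect to be the main obstacle is the ${\frak L}_{\mathfrak{o}}$ bookkeeping in assertion 1: showing that a bracket of $V_{[a]}$ against ${\frak L}_{\mathfrak{o}}$ that happens to land inside ${\frak L}_{\mathfrak{o}}$ actually falls into the restricted subspace ${\frak L}_{[a],\mathfrak{o}}$ rather than into a larger subspace of ${\frak L}_{\mathfrak{o}}$. A careful Super Leibniz reduction modelled on $[[{\frak L}_b,{\frak L}_c],[{\frak L}_{b'},{\frak L}_{c'}]]$ at the end of the proof of Proposition \ref{pro_ideal} should control this, but one must be meticulous with the $\mathbb{Z}_2$-sign conventions and with ensuring that every resulting inner bracket is indexed by a pair sitting inside $[a]$.
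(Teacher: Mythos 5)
Your proposal is correct and follows essentially the same route as the paper: the paper likewise verifies $[{\frak L}_{[a]},{\frak L}]\subset {\frak L}_{[a]}$ summand by summand against ${\frak L}={\frak L}_{\mathfrak{o}}\oplus\bigl(\bigoplus_{d\in\frak{S}\setminus\{\mathfrak{o}\}}{\frak L}_d\bigr)$, using the same connections $\{b,d\}$, $\{\tilde{b},\mathfrak{o}\}$, $\{b,\mathfrak{o}\}$ and the same Super Leibniz reductions, so your splitting of $\bigoplus_d{\frak L}_d$ into $V_{[a]}$ and the various $V_{[t]}$ in order to quote Propositions \ref{pro_ideal} and \ref{pro_prozero} directly is only a cosmetic reorganization. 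The one substantive difference is in assertion 2: the paper's proof jumps to ``simplicity implies ${\frak L}_{[a]}={\frak L}$'' without addressing the alternative ${\frak L}_{[a]}={\frak I}$ allowed by Definition \ref{Defsimple}, whereas your dichotomy (projecting away from ${\frak L}_{\mathfrak{o}}$ to get $V_{[a]}=V_{[b]}$ and hence $[a]=[b]$ when both ideals equal ${\frak I}$) closes that small gap; this is a welcome sharpening of the same argument rather than a different method.
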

\begin{proof}
1. We have
\begin{equation}\label{sumandos}
[{\frak L}_{[a]},{\frak L}] = \Bigl[{\frak L}_{[a],\mathfrak{o}} \oplus V_{[a]}, {\frak L}_{\mathfrak{o}} \oplus \Bigl( \bigoplus\limits_{d \in \frak{S} \setminus \{\mathfrak{o}\}} {\frak L}_d \Bigr)\Bigr].
\end{equation}

In case $[{\frak L}_b, {\frak L}_d] \neq 0$ for some $b \in [a]$ and $d \in \frak{S} \setminus \{\mathfrak{o}\}$, we have that $0 \neq [{\frak L}_b, {\frak L}_d] \subset {\frak L}_n$, with  $n \in \frak{S}$.
 If $n \neq \mathfrak{o}$  the connection $\{b, d\}$ gives us $b \sim n$,
so $n \in [a]$ and then $[{\frak L}_b, {\frak L}_d] \subset V_{[a]}$. If  $n = \mathfrak{o}$ the set $\{\tilde{b}, \mathfrak{o}\}$ is a connection from $b$ to $d$ and so $d \in [a]$, hence $[{\frak L}_b, {\frak L}_d] \subset {\frak L}_{[a],\mathfrak{o}}$. Therefore we get

\begin{equation}\label{eq1}
\Bigl[V_{[a]}, \bigoplus\limits_{d \in \frak{S} \setminus \{\mathfrak{o}\}} {\frak L}_d \Bigr] \subset {\frak L}_{[a]}.
\end{equation}

If we have $[{\frak L}_b , {\frak L}_{\mathfrak{o}}] \neq 0$ for some $b \in [a]$, then $b \star \mathfrak{o} = \{l\}$ with $l \neq \mathfrak{o}$. So $\{b, \mathfrak{o}\}$ is a connection from $b$ to $l$ and then $l \in [a]$. From here

\begin{equation}\label{eq2}
[V_{[a]}, \frak{L}_{\mathfrak{o}}] \subset V_{[a]}.
\end{equation}

Suppose now there exist $b, l \in [a]$ with $b \star l = \{\mathfrak{o}\}$ and $d \in \frak{S} \setminus \{\mathfrak{o}\}$ such that $0 \neq [[{\frak L}_b, {\frak L}_l], {\frak L}_d] \subset {\frak L}_m$, being necessarily $m \neq \mathfrak{o}$.
So for $\bar{i},\bar{j},\bar{k}  \in \mathbb{Z}_2$ such that
$0 \neq [[{\frak L}_b^{\bar i}, {\frak L}_l^{\bar j}], {\frak L}_d^{\bar k}] \subset {\frak L}_m^{\bar i+\bar j+\bar k}$, by applying the Super Leibniz identity we get that either $0 \neq [{\frak L}_b^{\bar i}, [{\frak L}_l^{\bar j}, {\frak L}_d^{\bar k}]] \subset {\frak L}_m^{\bar i+\bar j+\bar k}$ or $0 \neq [[{\frak L}_b^{\bar i},{\frak L}_d^{\bar k}],{\frak L}_l^{\bar j} ] \subset {\frak L}_m^{\bar i+\bar j+\bar k}$.  In the first possibility, there is $n \in \frak{S}$ such that $0 \neq [{\frak L}_l^{\bar j}, {\frak L}_d^{\bar k}] \subset {\frak L}_n^{\bar j+\bar k}$. So the connection $\{b, n\}$  gives us that $b$ is connected to $m$ and so $m \in [a]$. A similar result is obtained in second possibility and so we can assert
\begin{equation}\label{eq3}
\Bigl[{\frak L}_{[a],\mathfrak{o}}, \bigoplus\limits_{d \in \frak{S} \setminus \{\mathfrak{o}\}} {\frak L}_d \Bigr] \subset V_{[a]}.
\end{equation}

Finally, suppose there exist $b, l \in [a]$ with $b \star l = \{\mathfrak{o}\}$ satisfying $0 \neq [[{\frak L}_b, {\frak L}_l], {\frak L}_{\mathfrak{o}}] \subset {\frak L}_n$ with  $n \in \frak{S} $. If $n \neq \mathfrak{o}$ we have as above that $n \in [a]$.   If $n = \mathfrak{o}$, there exist  $\bar{i},\bar{j},\bar{k}  \in \mathbb{Z}_2$   such that $0 \neq [[{\frak L}_b^{\bar i}, {\frak L}_l^{\bar j}], {\frak L}_{\mathfrak{o}}^{\bar k}] \subset {\frak L}_{\mathfrak{o}}^{\bar i+\bar j+\bar k}$.  By the Super Leibniz identity  we have that either $0 \neq [{\frak L}_b^{\bar i}, [{\frak L}_l^{\bar j}, {\frak L}_{\mathfrak{o}}^{\bar k}]] \subset {\frak L}_{\mathfrak{o}}^{\bar i+\bar j+\bar k}$ or $0 \neq [[{\frak L}_b^{\bar i}, {\frak L}_{\mathfrak{o}}^{\bar k}],{\frak L}_l^{\bar j}] \subset {\frak L}_{\mathfrak{o}}^{\bar i+\bar j+\bar k}$ . In the first possibility $l \star \mathfrak{o} = \{r\}$, for some $r \in \mathfrak{S} \setminus \{\mathfrak{o}\}$,   being $\{l, \mathfrak{o}\}$ a connection from $l$ to $r$. From here $r \in [a]$ and $0 \neq [{\frak L}_b, [{\frak L}_l, {\frak L}_{\mathfrak{o}}]] \subset [{\frak L}_b, {\frak L}_r] \subset   {\frak L}_{[a], \mathfrak{o}}$. In the second possibility we obtain the same result, so we can summarize this paragraph by asserting

\begin{equation}\label{eq4}
[{\frak L}_{[a],\mathfrak{o}},{\frak L}_{\mathfrak{o}}] \subset {\frak L}_{[a]}.
\end{equation}

From equations \eqref{sumandos}-\eqref{eq4} we conclude the proof.

2. The simplicity of ${\frak L}$ implies ${\frak L}_{[a]} = {\frak L}$ for any $[a] \in (\frak{S} \setminus \{\mathfrak{o}\}) / \sim$. From here $[a] = \frak{S} \setminus \{\mathfrak{o} \}$ and so any couple of elements in $\frak{S} \setminus \{\mathfrak{o}\}$ is connected.
\end{proof}

\begin{notation}\rm
Let us denote $${\frak L}_{\frak{S},\mathfrak{o}} :=   \Bigl(\sum\limits_{{\tiny \begin{array}{c}
b,c \in  \frak{S} \setminus \{\mathfrak{o}\}\\
b \star c=\{\mathfrak{o}\}     \end{array}}}   [{\frak L}_b, {\frak L}_c] \Bigr) \cap \frak{L}_{\mathfrak{o}}$$
In what follows, we use the terminology $I_{[a]}:=
{\frak L}_{[a]}$ where ${\frak L}_{[a]}$ is one of the ideals of ${\frak L}$ described in Theorem \ref{teo1}-1.
\end{notation}

\begin{theorem} \label{teo2}
For a vector space $\mathcal{U}$ complement  of ${\frak L}_{\frak{S},\mathfrak{o}}$ in ${\frak L}_{\mathfrak{o}}$, it follows $${\frak L} = \mathcal{U} + \sum_{[a] \in  (\frak{S} \setminus \{\mathfrak{o}\}) / \sim}
I_{[a]}.$$
Moreover, $$[I_{[a]},I_{[t]}] = 0,$$ whenever $[a] \neq [t]$.
\end{theorem}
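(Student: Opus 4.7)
The plan is to combine the trivial partition of the off-$\mathfrak{o}$ homogeneous components under the equivalence relation $\sim$ with a description of ${\frak L}_{\mathfrak{o}}$ in terms of the ideals $I_{[a]}$, using Propositions \ref{pro_ideal} and \ref{pro_prozero} as black boxes. I start from the grading decomposition
\[
{\frak L} \;=\; {\frak L}_{\mathfrak{o}} \;\oplus\; \bigoplus_{a \in \frak{S} \setminus \{\mathfrak{o}\}} {\frak L}_a,
\]
and, since by Proposition \ref{pro1} the relation $\sim$ partitions $\frak{S} \setminus \{\mathfrak{o}\}$ into classes $[a]$, regroup the second summand as $\sum_{[a] \in (\frak{S} \setminus \{\mathfrak{o}\})/\sim} V_{[a]}$.

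Next I treat ${\frak L}_{\mathfrak{o}}$. By the hypothesis on $\mathcal{U}$, one has ${\frak L}_{\mathfrak{o}} = \mathcal{U} + {\frak L}_{\frak{S}, \mathfrak{o}}$, so it suffices to establish
\[
{\frak L}_{\frak{S}, \mathfrak{o}} \;\subset\; \sum_{[a] \in (\frak{S} \setminus \{\mathfrak{o}\})/\sim} {\frak L}_{[a], \mathfrak{o}}.
\]
For this, the key step is to show that whenever $b, c \in \frak{S} \setminus \{\mathfrak{o}\}$ satisfy $b \star c = \{\mathfrak{o}\}$, the elements $b$ and $c$ lie in the same $\sim$-class. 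I would prove this by contradiction using Proposition \ref{pro_prozero}: if $[b] \neq [c]$, then ${\frak L}_b \subset I_{[b]}$ and ${\frak L}_c \subset I_{[c]}$ would force $0 \neq [{\frak L}_b, {\frak L}_c] \subset [I_{[b]}, I_{[c]}] = 0$, a contradiction. Hence $b \sim c$, so $[{\frak L}_b, {\frak L}_c] \subset {\frak L}_{[b], \mathfrak{o}}$, yielding the claimed inclusion.

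Combining the two displays gives
\[
{\frak L} \;=\; \mathcal{U} + {\frak L}_{\frak{S}, \mathfrak{o}} + \sum_{[a]} V_{[a]} \;\subset\; \mathcal{U} + \sum_{[a]} ({\frak L}_{[a], \mathfrak{o}} + V_{[a]}) \;=\; \mathcal{U} + \sum_{[a]} I_{[a]},
\]
while the reverse inclusion is clear since $\mathcal{U} \subset {\frak L}$ and each $I_{[a]} \subset {\frak L}$. The \emph{moreover} statement $[I_{[a]}, I_{[t]}] = 0$ for $[a] \neq [t]$ is then a direct restatement of Proposition \ref{pro_prozero}, as $I_{[a]} = {\frak L}_{[a]}$ by definition. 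I expect the only genuine obstacle to be the implication $b \star c = \{\mathfrak{o}\} \Rightarrow b \sim c$; once this is in hand, the remainder is a clean reorganization of the grading.
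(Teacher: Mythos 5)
Your proposal is correct and follows essentially the same route as the paper: decompose ${\frak L}$ via the grading, regroup $\bigoplus_{a\neq\mathfrak{o}}{\frak L}_a$ into the $V_{[a]}$, absorb ${\frak L}_{\frak{S},\mathfrak{o}}$ into $\sum_{[a]}{\frak L}_{[a],\mathfrak{o}}$, and quote Proposition \ref{pro_prozero} for the \emph{moreover} part. The only difference is that you explicitly justify the inclusion ${\frak L}_{\frak{S},\mathfrak{o}}\subset\sum_{[a]}{\frak L}_{[a],\mathfrak{o}}$ (via the implication $b\star c=\{\mathfrak{o}\}\Rightarrow b\sim c$, deduced from Proposition \ref{pro_prozero}), a step the paper's proof asserts without comment.
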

\begin{proof}
From ${\frak L} = {\frak L}_{\mathfrak{o}} \oplus \bigl( \bigoplus\limits_{a \in \frak{S} \setminus \{\mathfrak{o}\}}{\frak L}_a \bigr) = \bigl(\mathcal{U} \oplus
{\frak L}_{\frak{S},\mathfrak{o}} \bigr) \oplus (\bigoplus\limits_{a \in \frak{S} \setminus \{\mathfrak{o}\}} {\frak L}_{a}),$
it follows
$$\bigoplus\limits_{a \in \frak{S} \setminus \{\mathfrak{o}\}}{\frak L}_a =
\bigoplus\limits_{[a] \in  (\frak{S} \setminus \{\mathfrak{o}\}) / \sim} V_{[a]}, \hspace{0.4cm} {\frak L}_{\frak{S},\mathfrak{o}} = \sum_{[a] \in  (\frak{S} \setminus \{\mathfrak{o}\}) / \sim} {\frak L}_{[a],\mathfrak{o}},$$
which implies
$${\frak L} = \bigl(\mathcal{U} \oplus {\frak L}_{\frak{S},\mathfrak{o}} \bigr) \oplus
\bigl(\bigoplus_{a \in \frak{S} \setminus \{\mathfrak{o}\}} {\frak L}_a)= \mathcal{U} + \sum\limits_{[a] \in  (\frak{S} \setminus \{\mathfrak{o}\}) / \sim} I_{[a]},$$ where each $I_{[a]}$ is an ideal of ${\frak L}$ by Theorem \ref{teo1}. Now, given $[a] \neq [t]$, the assertion $$[I_{[a]},I_{[t]}] = 0,$$ follows from Proposition \ref{pro_prozero}.
\end{proof}

\begin{corollary}\label{cooo1}
If $\mathfrak{o}=\emptyset$, then
$${\frak L} = \bigoplus_{[a] \in \frak{S}/\sim} {\frak L}_{[a]},$$ where any ${\frak L}_{[a]}$ is one of the ideals given in Proposition \ref{teo1}.
\end{corollary}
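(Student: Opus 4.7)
The plan is to reduce this corollary to a direct specialization of Theorem \ref{teo2} combined with a disjointness argument. When $\mathfrak{o}=\emptyset$ the convention introduced immediately after Equation \eqref{suma16} gives ${\frak L}_{\mathfrak{o}}:=\{0\}$, and moreover $\frak{S}\setminus\{\mathfrak{o}\}=\frak{S}$ since $\mathfrak{o}\notin\frak{S}$. I would first unravel what the ideals ${\frak L}_{[a]}$ look like under this hypothesis: the defining intersection in Equation \eqref{suma16} forces ${\frak L}_{[a],\mathfrak{o}}=0$, so that ${\frak L}_{[a]}=V_{[a]}=\bigoplus_{b\in[a]}{\frak L}_b$. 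By the same reasoning, ${\frak L}_{\frak{S},\mathfrak{o}}\subset {\frak L}_{\mathfrak{o}}=0$, hence a complement $\mathcal{U}$ of ${\frak L}_{\frak{S},\mathfrak{o}}$ in ${\frak L}_{\mathfrak{o}}$ must be the zero subspace.

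Next I would invoke Theorem \ref{teo2} directly: its conclusion ${\frak L}=\mathcal{U}+\sum_{[a]\in(\frak{S}\setminus\{\mathfrak{o}\})/\sim}I_{[a]}$ collapses, in view of the preceding paragraph, to
\[
{\frak L}=\sum_{[a]\in\frak{S}/\sim} V_{[a]}.
\]
This already yields the claimed decomposition as a (non-necessarily direct) sum of ideals, each of which is one of the ideals of Theorem \ref{teo1}-1 (now in their simplified form $V_{[a]}$).

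The only remaining point is that the sum is actually direct. Here I would use Proposition \ref{pro1}: the relation $\sim$ is an equivalence relation on $\frak{S}$, so the classes $[a]$ partition $\frak{S}$ into pairwise disjoint blocks. Since each $V_{[a]}=\bigoplus_{b\in[a]}{\frak L}_b$ is built from the homogeneous components indexed by its block, and since the original decomposition ${\frak L}=\bigoplus_{b\in\frak{S}}{\frak L}_b$ is direct by hypothesis, the sum $\sum_{[a]\in\frak{S}/\sim} V_{[a]}$ is automatically a direct sum, giving ${\frak L}=\bigoplus_{[a]\in\frak{S}/\sim}{\frak L}_{[a]}$ as required.

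I expect no significant obstacle: the corollary is essentially a bookkeeping consequence of Theorem \ref{teo2} once one observes that the distinguished homogeneous space ${\frak L}_{\mathfrak{o}}$ collapses to zero under the convention $\mathfrak{o}=\emptyset$. The only conceptual content is the partitioning argument in the last step, which is immediate from Proposition \ref{pro1} and the hypothesis that ${\frak L}=\bigoplus_{a\in\frak{S}}{\frak L}_a$ is a set-grading.
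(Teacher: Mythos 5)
Your proposal is correct and follows exactly the route the paper intends: the corollary is stated without proof precisely because it is the specialization of Theorem \ref{teo2} to the case ${\frak L}_{\mathfrak{o}}=\{0\}$, where $\mathcal{U}=0$, ${\frak L}_{[a],\mathfrak{o}}=0$, and ${\frak L}_{[a]}=V_{[a]}$. Your final directness observation is also already implicit in the proof of Theorem \ref{teo2}, where $\bigoplus_{a\in\frak{S}\setminus\{\mathfrak{o}\}}{\frak L}_a=\bigoplus_{[a]}V_{[a]}$ is written as a direct sum, so nothing is missing.
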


We recall that the {\it center} of a Leibniz superalgebra ${\frak L}$ is the set $${\mathcal Z}({\frak L}) := \Bigl\{x \in {\frak L}:[x, {\frak L}]+[{\frak L},x]=0 \Bigr\},$$ and we say that ${\frak L}_{\mathfrak{o}}$ is {\it tight} whence either ${\frak L}_{\mathfrak{o}} = \{0\}$ or ${\frak L}_{\mathfrak{o}} = \sum_{a,b \in \frak{S} \setminus \{\mathfrak{o}\},a \star b=\{\mathfrak{o}\}}[{\frak L}_a,{\frak L}_b]$.

\begin{corollary}\label{co1}
Suppose ${\frak L}$ is centerless and ${\frak L}_{\mathfrak{o}}$ is tight, then the set-graded Leibniz supe\-ralgebra ${\frak L}$ decomposes as the direct sum of the ideals given in Theorem \ref{teo1},

$${\frak L} = \bigoplus_{[a] \in (\frak{S} \setminus \{\mathfrak{o}\})/\sim} {\frak L}_{[a]}.$$
\end{corollary}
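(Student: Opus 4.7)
The plan is to derive the direct-sum decomposition in two stages: first, invoke Theorem \ref{teo2} to write ${\frak L}$ as a sum of the ideals $I_{[a]}$ (tightness will dispose of the ``extra'' complement $\mathcal{U}$), and then use Proposition \ref{pro_prozero} together with the centerless hypothesis to upgrade this sum to a direct sum. For the first stage, tightness of ${\frak L}_{\mathfrak{o}}$ says either ${\frak L}_{\mathfrak{o}} = \{0\}$ or ${\frak L}_{\mathfrak{o}} = {\frak L}_{{\frak S},\mathfrak{o}}$, so a complement $\mathcal{U}$ of ${\frak L}_{{\frak S},\mathfrak{o}}$ in ${\frak L}_{\mathfrak{o}}$ can be chosen to be $\{0\}$; Theorem \ref{teo2} then gives ${\frak L} = \sum_{[a]} I_{[a]}$ with $[I_{[a]}, I_{[t]}] = 0$ whenever $[a] \neq [t]$, so only the directness of this sum remains to be checked.

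To verify directness, suppose $\sum_{[a]} x_{[a]} = 0$ with $x_{[a]} \in I_{[a]} = {\frak L}_{[a],\mathfrak{o}} \oplus V_{[a]}$, and split $x_{[a]} = y_{[a]} + z_{[a]}$ accordingly. Since the equivalence classes partition ${\frak S} \setminus \{\mathfrak{o}\}$, the subspaces $V_{[a]}$ occupy pairwise disjoint homogeneous components of the original set-grading; projecting onto $\bigoplus_{b \in {\frak S} \setminus \{\mathfrak{o}\}} {\frak L}_b$ forces $z_{[a]} = 0$ for every $[a]$, and so $\sum_{[a]} y_{[a]} = 0$ with each $y_{[a]} \in {\frak L}_{[a],\mathfrak{o}} \subset {\frak L}_{\mathfrak{o}}$.

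It remains to show each $y_{[a_0]}$ vanishes, and this is where centerlessness enters. Fix $[a_0]$ and rewrite $y_{[a_0]} = -\sum_{[a] \neq [a_0]} y_{[a]}$. On one hand, $y_{[a_0]} \in I_{[a_0]}$ together with Proposition \ref{pro_prozero} gives $[y_{[a_0]}, I_{[b]}] + [I_{[b]}, y_{[a_0]}] = 0$ for every $[b] \neq [a_0]$; on the other hand, the rewriting places $y_{[a_0]}$ in $\sum_{[a] \neq [a_0]} I_{[a]}$, and another application of Proposition \ref{pro_prozero} handles the case $[b] = [a_0]$ as well. Summing over $[b]$ produces $[y_{[a_0]}, {\frak L}] + [{\frak L}, y_{[a_0]}] = 0$, so $y_{[a_0]} \in \mathcal{Z}({\frak L}) = \{0\}$. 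This centrality argument is the only genuinely nontrivial step: the $V_{[a]}$ parts decouple automatically thanks to the set-grading, whereas the ${\frak L}_{\mathfrak{o}}$-components of different ideals $I_{[a]}$ could a priori overlap, and the centerless hypothesis is exactly what is required to force this overlap to be trivial.
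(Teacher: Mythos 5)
Your proof is correct and follows essentially the same route as the paper: tightness forces $\mathcal{U}=0$ in Theorem \ref{teo2}, and directness is obtained by showing that an element lying in one ideal $I_{[a_0]}$ and simultaneously in the sum of the others annihilates all of ${\frak L}$ via Proposition \ref{pro_prozero}, hence lies in $\mathcal{Z}({\frak L})=0$. Your preliminary step of stripping off the $V_{[a]}$-components using the set-grading is harmless but not needed, since the paper applies the same centrality argument directly to an arbitrary element of ${\frak L}_{[a_0]} \cap \sum_{[t]\neq[a_0]}{\frak L}_{[t]}$.
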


\begin{proof}
By Theorem \ref{teo2}, since $\mathcal{U} = 0,$ we just have to show the direct character of the sum. Given $$x \in \frak{L}_{[a]} \cap \sum\limits_{{\tiny \begin{array}{c}
[t] \in (\frak{S} \setminus \{\mathfrak{o}\})/\sim\\
  t \nsim a  \end{array}}}
  \frak{L}_{[t]},$$ by using the fact $[\frak{L}_{[a]},\frak{L}_{[t]}] = 0$ for $[a] \neq [t]$ we obtain

$$[x,\frak{L}_{[a]}] + \Bigl[x, \sum\limits_{{\tiny \begin{array}{c}
[t] \in (\frak{S} \setminus \{\mathfrak{o}\})/\sim \\
  t \nsim a  \end{array}}} \frak{L}_{[t]} \Bigr] = 0,$$
$$[\frak{L}_{[a]},x] + \Bigl[ \sum\limits_{{\tiny \begin{array}{c}
[t] \in (\frak{S} \setminus \{\mathfrak{o}\})/\sim\\
  t \nsim a  \end{array}}} \frak{L}_{[t]},x \Bigr] = 0.$$

It implies $[x,\frak{L}] = [\frak{L},x] = 0$, that is,   $x \in {\mathcal Z}({\frak L})  $ and so $x  = 0$    as desired.
\end{proof}

\section{The simple components}

In this section we focus on the simplicity of set-graded Leibniz superalgebras by centering our attention in those of maximal length. This terminology is taking patterned from the theory of gradations of Lie and Leibniz algebras (see for example \cite{Alberiogradu, Diamante, Gomez1}). See also \cite{Diamante, Yo1, Yo2, Yo3, Stumme} for examples.

\begin{definition}\rm
We say that a set-graded Leibniz superalgebra ${\frak L}$ is of {\it maximal length} if $\dim ({{\frak L}}_a^{\bar i}) \in \{0, 1\}$ for any $a \in \frak{S} \setminus \{\mathfrak{o}\}$ and  $ \bar{i} \in {\hu Z}_2.$
\end{definition}

Our target is to characterize the simplicity of ${\frak L}$ in terms of connectivity properties in $\frak{S}$. Therefore we would like to attract attention to the definition of simple Leibniz
superalgebra given in Definition \ref{Defsimple}, and the previous discussion.

The following lemma is consequence of the fact that the set of multiplications by elements in ${\frak L}_{\mathfrak{o}}$ is a commuting set of diagonalizable endomorphisms and $I$ is invariant under this set.

\begin{lemma}\label{lema5}
Let ${\frak L} = {\frak L}_{\mathfrak{o}} \oplus (\bigoplus_{a \in \frak{S}\setminus \{\mathfrak{o}\}} {\frak L}_a)$ be a set-graded Leibniz superalgebra.   If $I$ is
 a set-graded ideal   of ${\frak L}$ then $$I = (I \cap {\frak L}_{\mathfrak{o}}) \oplus \bigl(\bigoplus\limits_{a \in \frak{S}\setminus \{\mathfrak{o}\}} (I \cap {\frak L}_a)\bigr).$$
\end{lemma}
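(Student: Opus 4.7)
Since the paper itself telegraphs the key idea in the sentence preceding the lemma, the natural approach is to invoke the classical linear algebra principle: a subspace of a vector space that is invariant under a commuting family of diagonalizable endomorphisms decomposes as the direct sum of its intersections with the simultaneous eigenspaces of that family. The plan is to realize the set grading of $\mathfrak{L}$ as (or as a refinement of) the simultaneous eigenspace decomposition relative to the family $\mathcal{T} = \{L_z, R_z : z \in \mathfrak{L}_{\mathfrak{o}}\}$ of left and right multiplications by elements of $\mathfrak{L}_{\mathfrak{o}}$, and then to observe that $I$ is automatically $\mathcal{T}$-invariant because it is an ideal.

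The first step is the invariance of $I$: for any $z \in \mathfrak{L}_{\mathfrak{o}}$ and $x \in I$, the elements $L_z(x) = [z,x]$ and $R_z(x) = [x,z]$ belong to $[\mathfrak{L}, I] + [I, \mathfrak{L}] \subset I$. The second step is to show that $\mathcal{T}$ is a commuting family, which will follow from the Super Leibniz identity applied to bracketing with two elements of $\mathfrak{L}_{\mathfrak{o}}$, combined with the defining property of $\mathfrak{o}$ (that $\mathfrak{o}\star a \neq \{\mathfrak{o}\}$ whenever $a \neq \mathfrak{o}$) to ensure the operators really do preserve each $\mathfrak{L}_a$. The third step is to identify each $\mathfrak{L}_a$ with a common eigenspace of $\mathcal{T}$; the maximal length context of Section~3 is what makes this clean, since each $\mathfrak{L}_a^{\bar{i}}$ is at most one-dimensional and hence $L_z, R_z$ act on it by scalars, producing an eigenvalue tuple that distinguishes $a$ from all other elements of $\mathfrak{S}$.

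The main obstacle I anticipate is this last identification step: one must verify that distinct labels $a \in \mathfrak{S}$ really produce distinct eigenvalue tuples under $\mathcal{T}$, so that the simultaneous eigenspace decomposition genuinely coincides with the given set grading rather than collapsing several $\mathfrak{L}_a$ together. This requires $\mathfrak{L}_{\mathfrak{o}}$ to be rich enough to separate the homogeneous components, which is where one expects the hypotheses in force in Section~3 (maximal length, together with the regularity of $\mathfrak{o}$) to do the real work. Once this identification is established, the classical fact yields $I = \bigoplus_{a \in \mathfrak{S}}(I \cap \mathfrak{L}_a) = (I \cap \mathfrak{L}_{\mathfrak{o}}) \oplus \bigoplus_{a \in \mathfrak{S}\setminus\{\mathfrak{o}\}}(I \cap \mathfrak{L}_a)$, which is the stated decomposition.
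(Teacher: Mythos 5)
Your plan faithfully expands the one\-/sentence justification that the paper itself gives before the lemma, but the obstacle you flag in your last paragraph is not a deferred technicality --- it is fatal to this approach in the generality in which the lemma is stated, and you do not close it. Three concrete problems. First, the lemma is stated \emph{before} the maximal-length hypothesis is imposed (``From now on \dots'' comes after it), so you may not assume $\dim \mathfrak{L}_a^{\bar i}\le 1$. Second, for a set grading the operators $L_z,R_z$ with $z\in\mathfrak{L}_{\mathfrak{o}}$ need not preserve the components at all: $[\mathfrak{L}_{\mathfrak{o}},\mathfrak{L}_a]$ lands in $\mathfrak{L}_c$ where $\{c\}=\mathfrak{o}\star a$, and the only property the paper imposes on $\mathfrak{o}$ is $c\neq\mathfrak{o}$, not $c=a$; so the $\mathfrak{L}_a$ are not eigenspaces, and the family need not be diagonalizable. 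Third, even when every component is invariant, nothing separates two components that are both annihilated on both sides by $\mathfrak{L}_{\mathfrak{o}}$; indeed $\mathfrak{L}_{\mathfrak{o}}=\{0\}$ is explicitly allowed (the case $\mathfrak{o}=\emptyset$), and then for an abelian $\mathfrak{L}=\mathfrak{L}_a\oplus\mathfrak{L}_b$ the subspace $\mathbb{K}(x_a+x_b)$, with $0\neq x_a\in\mathfrak{L}_a^{\bar 0}$ and $0\neq x_b\in\mathfrak{L}_b^{\bar 0}$, is an ideal invariant under the (empty) family of multiplications yet does not split. Hence no argument along these lines can prove the displayed decomposition for an arbitrary ideal.

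What actually carries the lemma is the hypothesis you did not use: $I$ is assumed to be a \emph{set-graded} ideal, and by Equation \eqref{idealpartio} such an ideal splits by definition as $I=\bigoplus_{a}(I\cap\mathfrak{L}_a)$; the formula in the lemma is exactly this splitting with the $\mathfrak{o}$-component written separately. So the honest proof is a one-line unwinding of the definition, and the commuting-diagonalizable-endomorphisms heuristic (which the paper also invokes, so you are in good company) is at best a motivation imported from the group-graded case, where $[\mathfrak{L}_0,\mathfrak{L}_g]\subseteq\mathfrak{L}_g$ genuinely holds. If the intention is instead to show that \emph{every} ideal is automatically set-graded --- which is how the lemma is later applied, e.g.\ to an arbitrary nonzero ideal in Theorem \ref{last} --- then that is a strictly stronger claim requiring additional hypotheses that neither your argument nor the paper supplies.
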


From now on ${\frak L} = {\frak L}_{\mathfrak{o}} \oplus \bigl(\bigoplus_{a \in\frak{S} \setminus \{\mathfrak{o}\}}{\frak L}_a \bigr)$ denotes a set-graded Leibniz superalgebra of maximal length without further mention. In this case, we begin by observing that Lemma \ref{lema5} allows us to assert that given any nonzero (set-graded) ideal $I = I^{\bar 0} \oplus I^{\bar 1}$ of ${\frak L}$ then

$$I = (I \cap {\frak L}_{\mathfrak{o}}) \oplus \bigl(\bigoplus\limits_{a \in \frak{S} \setminus \{\mathfrak{o}\}} I \cap {\frak L}_a \bigr)=$$
$$=(I \cap {\frak L}_{\mathfrak{o}}) \oplus \Bigl(\bigoplus\limits_{a \in \frak{S} \setminus \{\mathfrak{o}\}} \bigl((I^{\bar{0}} \cap {\frak
L}_a^{\bar 0}) \oplus (I^{\bar 1} \cap {\frak L}_a^{\bar 1}) \bigr) \Bigr)$$
\begin{equation}\label{max}
=(I \cap {\frak L}_{\mathfrak{o}}) \oplus \bigl(\bigoplus\limits_{a \in \frak{S}_I^{\bar 0}} {\frak L}_a^{\bar 0} \bigr) \oplus \bigl(\bigoplus\limits_{b \in \frak{S}_I^{\bar 1}} {\frak L}_b^{\bar 1} \bigr)
\end{equation}
where $\frak{S}_I^{\bar i} := \bigl\{ a \in \frak{S} \setminus \{\mathfrak{o}\} : I^{\bar i} \cap {\frak L}_a^{\bar i} \neq 0 \bigr\}$, for $\bar{i} \in {\hu Z}_2$. In the important case of the ideal $I = {\frak I}$ defined by \eqref{equi3}, we get
\begin{equation}\label{I}
{\frak I} = ({\frak I} \cap {\frak L}_{\mathfrak{o}}) \oplus \bigl( \bigoplus\limits_{a \in \frak{S}_{{\frak I}}^{\bar 0}} {\frak L}_a^{\bar 0} \bigr) \oplus \bigl(\bigoplus\limits_{b \in \frak{S}_{\frak I}^{\bar 1}} {\frak L}_b^{\bar 1} \bigr)
\end{equation}
with $\frak{S}_{\frak I}^{\bar i} := \bigl\{ a \in \frak{S} \setminus \{\mathfrak{o}\} : {\frak I}^{\bar i} \cap {\frak L}_a^{\bar i} \neq
0 \bigr\} = \bigl\{ a \in \frak{S} \setminus \{\mathfrak{o}\} : 0 \neq {\frak L}_a^{\bar i} \subset {\frak I}^{\bar i} \bigr\},$ for $\bar{i} \in {\hu Z}_2$.
From here, we can write
\begin{equation}\label{separa2}
\frak{S} \setminus \{\mathfrak{o}\} = \underbrace{(\frak{S}_{\frak I}^{\bar 0} \dot{\cup} \frak{S}_{\neg{\frak I}}^{\bar 0})}_{\frak{S}^{\bar 0}\setminus \{\mathfrak{o}\}} \cup \underbrace{(\frak{S}_{\frak I}^{\bar 1} \dot{\cup} \frak{S}_{\neg{\frak I}}^{\bar 1})}_{\frak{S}^{\bar 1}\setminus \{\mathfrak{o}\}}
\end{equation}
where $\hbox{$\frak{S}_{\neg{\frak I}}^{\bar i} := \bigl\{ a \in \frak{S} \setminus \{\mathfrak{o}\} : {\frak L}_a^{\bar i} \neq 0$ and ${\frak I}^{\bar i} \cap {\frak L}_a^{\bar i}  = 0 \bigr\}$}$ for $\bar{i} \in {\hu Z}_2$. We also denote
$$\hbox{$\frak{S}_{\Upsilon} := \frak{S}_{\Upsilon}^{\bar 0} \cup \frak{S}_{\Upsilon}^{\bar 1}$, for $\Upsilon \in \{{\frak I}, \neg {\frak I}\}$.}$$
Hence, we can write
\begin{equation}\label{Llargo}
{\frak L} = \bigl({\frak L}_{\mathfrak{o}}^{\bar 0} \oplus {\frak L}_{\mathfrak{o}}^{\bar 1} \bigr) \oplus
\bigl(\bigoplus\limits_{a \in \frak{S}_{\frak I}^{\bar 0}} {\frak L}_a^{\bar 0} \bigr) \oplus
\bigl(\bigoplus\limits_{b \in \frak{S}_{\neg{\frak I}}^{\bar 0}}  {\frak L}_b^{\bar 0}\bigr) \oplus \bigl(\bigoplus\limits_{c \in \frak{S}_{\frak
I}^{\bar{1}}} {\frak L}_c^{\bar 1} \bigr) \oplus
\bigl(\bigoplus\limits_{d \in \frak{S}_{\neg{\frak I}}^{\bar 1}} {\frak L}_d^{\bar 1} \bigr).
\end{equation}

\begin{remark}\label{re3.1}\rm
Since our aim in this section is to characterize the simplicity of ${\frak L}$, in terms of connections, Theorem \ref{teo1}-2 gives us that we have to center our attention in those set-graded Leibniz superalgebras satisfying ${\frak L}_{\mathfrak{o}} = \sum_{a,b \in \frak{S} \setminus \{\mathfrak{o}\},a \star b=\{\mathfrak{o}\}}[{\frak L}_a,{\frak L}_b]$.   This is for instance the case whence ${\frak L}=[{\frak L},{\frak L}]$.   We would like to note that if ${\frak L}_{\mathfrak{o}} = \sum_{a,b \in \frak{S} \setminus \{\mathfrak{o}\},a \star b = \{\mathfrak{o}\}}[{\frak L}_a,{\frak L}_b]$, then the decomposition given by Equation \eqref{Llargo} and Equation \eqref{equi} show $${\frak L}_{\mathfrak{o}} = \Bigl(\underbrace{\sum\limits_{a \in \frak{S}^{\bar 0},b \in \frak{S}^{\bar 0}_{\neg {\frak I}}, a \star b = \{\mathfrak{o}\}} [{\frak L}_a^{\bar 0}, {\frak L}_b^{\bar 0}] + \sum\limits_{a \in \frak{S}^{\bar 1},b \in \frak{S}^{\bar 1}_{\neg {\frak I}},a \star b = \{\mathfrak{o}\}} [{\frak L}_a^{\bar 1}, {\frak L}_b^{\bar 1}]}_{{\frak L}_{\mathfrak{o}}^{\bar 0}} \Bigr) \oplus$$
\begin{equation}\label{equsimple}
\Bigl(\underbrace{\sum\limits_{a \in \frak{S}^{\bar 1}, b \in \frak{S}^{\bar 0}_{\neg {\frak I}}, a \star b = \{\mathfrak{o}\}} [{\frak L}_a^{\bar 1}, {\frak L}_b^{\bar 0}]
+ \sum\limits_{a \in \frak{S}^{\bar 0}, b \in \frak{S}^{\bar 1}_{\neg {\frak I}}, a \star b = \{\mathfrak{o}\}} [{\frak L}_a^{\bar 0},{\frak L}_b^{\bar 1}]}_{{\frak L}_{\mathfrak{o}}^{\bar{1}}}\Bigr).
\end{equation}
\end{remark}

Now, observe that the concept of connectivity given in Definition \ref{con} is not strong enough to detect if a given $a \in \frak{S}$ belongs to $\frak{S}_{\frak I}^{\bar i}$ or to $\frak{S}_{\neg{{\frak I}}}^{\bar i}$, for some ${\bar i} \in {\mathbb Z}_2$. Consequently we lose the information respect to whether a given component ${\frak L}_a$ intersects to ${\frak I}$ in a non-trivial way or not, which is fundamental to study the simplicity of ${\frak L}$. So, we are
going to make more accurate the previous concept of connection.

\begin{definition}\label{def_cone_S}\rm
Let $a \in \frak{S}_{\Upsilon}^{\bar i}$ and $ b \in \frak{S}_{\Upsilon}^{\bar j}$ with $\Upsilon \in \{{\frak I}, \neg{\frak I}\}$ and ${\bar i}, {\bar j} \in {\mathbb Z}_2$. We say that $a$ is {\it ${\neg{\frak I}}$-connected} to $b,$ denoted by $a \sim_{\neg{\frak I}} b,$ if either $a = b$ or there exists a family of elements $\{r_1,r_2, \dots, r_n\}$  such that for $k=2,\dots,n$ it follows that $r_k \in
\frak{S}_{\neg{\frak I}}^{{\bar i}_k}$ for some ${\bar i}_k \in {\mathbb Z}_2$,  and
\begin{itemize}
\item[1.] $r_1 = a$.
\item[2.] $\phi(\{r_1\},r_2) \in \frak{S}_{\Upsilon}^{{\bar i}+ {\bar i_2}},$\\
$\phi(\phi(\{r_1\},r_2),r_3) \in \frak{S}_{\Upsilon}^{{\bar i}+ {\bar i_2}+{\bar i_3}},$\\
$ \hspace*{1.6cm} \vdots$\\
$\phi(\dots\phi(\{r_1\},r_2),\dots,r_{n-1}) \in \frak{S}_{\Upsilon}^{{\bar i}+ {\bar i_2}+\cdots+{\bar i_{n-1}}},$\\
$\phi(\phi(\dots\phi(\{r_1\},r_2),\dots,r_{n-1}),r_n) \in \frak{S}_{\Upsilon}^{{\bar i}+ {\bar i_2}+\cdots+{\bar i_{n-1}}+ {\bar i_n}}$
\item[3.] $b \in \phi(\phi( \dots \phi(\{r_1\}, r_2), \dots, r_{n-1}), r_n)$ and ${\bar i}+ {\bar i_2}+\cdots+{\bar i_{n-1}}+ {\bar i_n}={\bar j}$.
\end{itemize}
The set $\{r_1,r_2,\dots,r_n\}$ is called a {\it
${\neg{\frak I}}$-connection} from $a$ to $b$.
\end{definition}

Let us introduce the notion of $\frak{S}$-multiplicativity in the framework of set-graded Leibniz superalgebras of maximal length, in a similar way to the ones for split Lie algebras, split Lie superalgebras and split Leibniz algebras among other split algebraic structures (see \cite{Yo1, Yo2, Yo3}  for these notions and examples).

\begin{definition}\label{defmulti}\rm
A set-graded Leibniz superalgebra of maximal length ${\frak L}$ is $\frak{S}$-{\it multiplicative} if the following conditions hold:
\begin{enumerate}
\item Given $a \in \frak{S}_{\neg {\frak I}}^{\bar i}$ and $b \in \frak{S}_{\neg {\frak I}}^{\bar j}$ such that $a \in b \star r$ for some $r \in \frak{S}^{\bar k} \dot{\cup} \tilde{\frak{S}}^{\bar k}$ then $$\frak{L}_a^{\bar i} \subset [\frak{L}_b^{\bar j},\frak{L}_r^{\bar k} + \frak{L}_{\tilde{r}}^{\bar k}].$$
\item Given $c \in \frak{S}_{\frak I}^{\bar i}$ and $d \in \frak{S}_{\frak I}^{\bar j}$ such that $c \in d \star r$ for some $r \in \frak{S}_{\neg \frak I}^{\bar k} \dot{\cup} \tilde{\frak{S}}_{\neg \frak I}^{\bar k}$ then $$\frak{L}_c^{\bar i} \subset [\frak{L}_d^{\bar j},\frak{L}_r^{\bar k} + \frak{L}_{\tilde{r}}^{\bar k}],$$
where $\frak{L}_{l}^{\bar k}$ denotes the empty set when $l$ is a symbol in $\tilde{\frak{S}}$.
\end{enumerate}
\end{definition}

\begin{definition} \rm
We say that $\frak{S}_{\Upsilon}$, with $\Upsilon \in \{{\frak I}, \neg {\frak I}\}$, has all of its elements {\it ${\neg {\frak I}}$-connected} if for any ${\bar i}, {\bar j} \in {\mathbb Z}_2$
we have that $\frak{S}_{\Upsilon}^{\bar i}$ has all of its elements connected to any element in $\frak{S}_{\Upsilon}^{\bar j}$.
\end{definition}

\begin{proposition}\label{nueva1}
Suppose that  ${\frak L}$ is $\frak{S}$-multiplicative, ${\frak L}_{\mathfrak{o}} = \sum_{c,d \in \frak{S} \setminus \{\mathfrak{o}\}, c \star d = \{\mathfrak{o}\}} [{\frak L}_c, {\frak L}_d]$, $|\frak{S}_{\neg {\frak I}}|>1$ and  $\frak{S}_{\neg {\frak I}}$  has all of its elements ${\neg {\frak I}}$-connected. Then any nonzero ideal $I$ of ${\frak L}$ such that $I \nsubseteq {\frak L}_{\mathfrak{o}} + {\frak I}$ satisfies that  $I={\frak L}.$
\end{proposition}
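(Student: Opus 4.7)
The plan is to show that $I$ contains every summand of the decomposition (\ref{Llargo}), which will give $I = \frak{L}$. First, since $I$ is a set-graded ideal of the maximal-length superalgebra $\frak{L}$, Lemma~\ref{lema5} combined with the maximal-length hypothesis gives
$$I = (I \cap \frak{L}_{\mathfrak{o}}) \oplus \bigoplus_{a \in \frak{S} \setminus \{\mathfrak{o}\},\; \bar i \in \mathbb{Z}_2}(I^{\bar i} \cap \frak{L}_a^{\bar i}),$$
with each $I^{\bar i} \cap \frak{L}_a^{\bar i}$ equal to $0$ or to the whole $\frak{L}_a^{\bar i}$. Comparing with (\ref{I}), the hypothesis $I \nsubseteq \frak{L}_{\mathfrak{o}} + \frak{I}$ forces some $a_0 \in \frak{S}_{\neg \frak{I}}^{\bar i_0}$ to satisfy $\frak{L}_{a_0}^{\bar i_0} \subset I$.

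Next, I will spread this seed to every $\frak{L}_b^{\bar j}$ with $b \in \frak{S}_{\neg \frak{I}}^{\bar j}$. By hypothesis $a_0$ is $\neg \frak{I}$-connected to $b$, so there exists a $\neg \frak{I}$-connection $\{r_1 = a_0, r_2, \dots, r_n\}$. Write $b_k$ for the unique element of $\phi(\dots\phi(\{r_1\},r_2),\dots,r_k) \cap \frak{S}$; by Definition~\ref{def_cone_S} each $b_k$ lies in $\frak{S}_{\neg \frak{I}}^{\bar i_0 + \bar i_2 + \cdots + \bar i_k}$. Since $b_k \in b_{k-1} \star r_k$, $\frak{S}$-multiplicativity condition~(1) yields
$$\frak{L}_{b_k}^{\bar i_0 + \cdots + \bar i_k} \subset \bigl[\frak{L}_{b_{k-1}}^{\bar i_0 + \cdots + \bar i_{k-1}}, \frak{L}_{r_k}^{\bar i_k} + \frak{L}_{\tilde{r}_k}^{\bar i_k}\bigr] \subset [I, \frak{L}] \subset I,$$
invoking the inductive hypothesis $\frak{L}_{b_{k-1}}^{\bar i_0 + \cdots + \bar i_{k-1}} \subset I$; at $k = n$ this reads $\frak{L}_b^{\bar j} \subset I$. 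Combining the assumption $\frak{L}_{\mathfrak{o}} = \sum [\frak{L}_c, \frak{L}_d]$ with $[\frak{L}, \frak{I}] = 0$ (Equation (\ref{equi})) as displayed in (\ref{equsimple}), every surviving bracket in the sum has at least one factor lying in some $\frak{L}_{b'}$ with $b' \in \frak{S}_{\neg \frak{I}}$, and hence inside $I$; consequently $\frak{L}_{\mathfrak{o}} \subset [\frak{L}, I] \subset I$.

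The main obstacle is to reach the remaining components $\frak{L}_c^{\bar k}$ with $c \in \frak{S}_{\frak{I}}^{\bar k}$, since the connectivity assumption does not involve $\frak{S}_{\frak{I}}$ directly. To handle them I invoke $[\frak{L}, \frak{I}] = 0$ once again: by maximal length $\frak{L}_c^{\bar k}$ is $1$-dimensional, and since it sits inside $\frak{I}^{\bar k}$ it must be spanned by some nonzero generator $g = [x,y] + (-1)^{\bar i \bar j}[y,x]$ of $\frak{I}$ with $x \in \frak{L}_a^{\bar i}$, $y \in \frak{L}_b^{\bar j}$, $a \star b = \{c\}$ and $\bar i + \bar j = \bar k$. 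If both $a, b$ were in $\frak{S}_{\frak{I}}$ then $x, y \in \frak{I}$, and Equation (\ref{equi}) would force $[x,y] = [y,x] = 0$, contradicting $g \neq 0$. So at least one of $a, b$ lies in $\{\mathfrak{o}\} \cup \frak{S}_{\neg \frak{I}}$, meaning the corresponding $\frak{L}_a^{\bar i}$ or $\frak{L}_b^{\bar j}$ is already contained in $I$ by the previous step; the ideal property then puts $g$ inside $I$, whence $\frak{L}_c^{\bar k} \subset I$. Assembling, $I \supseteq \frak{L}_{\mathfrak{o}} \oplus \bigoplus_{a \in \frak{S} \setminus \{\mathfrak{o}\}} \frak{L}_a = \frak{L}$, so $I = \frak{L}$.
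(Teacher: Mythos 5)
Your first three stages (locating a seed ${\frak L}_{a_0}^{{\bar i}_0}\subset I$ with $a_0\in\frak{S}_{\neg{\frak I}}^{{\bar i}_0}$, propagating it along ${\neg{\frak I}}$-connections via $\frak{S}$-multiplicativity, and then absorbing ${\frak L}_{\mathfrak{o}}$) follow the paper's proof, but there is a genuine gap in the propagation step: you never obtain ${\frak L}_{a_0}^{{\bar i}_0+\bar 1}\subset I$ when $a_0$ also belongs to $\frak{S}_{\neg{\frak I}}^{{\bar i}_0+\bar 1}$. Definition \ref{def_cone_S} declares $a$ and $b$ ${\neg{\frak I}}$-connected outright whenever $a=b$, with no parity condition, so the connectivity hypothesis supplies no nontrivial connection from $a_0$ (parity ${\bar i}_0$) to $a_0$ (parity ${\bar i}_0+\bar 1$), and your induction has nothing to run on for that component. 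The tell-tale sign is that you never use the hypothesis $|\frak{S}_{\neg{\frak I}}|>1$: it is there precisely to close this case. The paper first picks $b\in\frak{S}_{\neg{\frak I}}^{\bar i}$ with $b\neq a_0$, gets ${\frak L}_b^{\bar i}\subset I$ by the argument you gave, and then propagates from $b$ back to $a_0$; since $b\neq a_0$ that connection is necessarily nontrivial and its parity bookkeeping reaches ${\frak L}_{a_0}^{\bar k}$ for every $\bar k$ with $a_0\in\frak{S}_{\neg{\frak I}}^{\bar k}$. Without this, your later steps (which consume \emph{all} of $\bigoplus_{b\in\frak{S}_{\neg{\frak I}}^{\bar j}}{\frak L}_b^{\bar j}\subset I$ to get ${\frak L}_{\mathfrak{o}}\subset I$ via \eqref{equsimple} and then ${\frak I}\subset I$) and the final assembly are not justified.

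Your handling of the components ${\frak L}_c^{\bar k}$ with $c\in\frak{S}_{{\frak I}}^{\bar k}$ genuinely departs from the paper, which at that point brackets against elements of $\frak{S}_{\neg{\frak I}}$ and invokes condition (2) of Definition \ref{defmulti}. Your alternative --- expressing ${\frak L}_c^{\bar k}\subset{\frak I}$ through elements $[x,y]+(-1)^{{\bar i}{\bar j}}[y,x]$ and using $[{\frak L},{\frak I}]=0$ (Equation \eqref{equi}) to force a factor of each nonzero product outside ${\frak I}$, hence inside $I$ --- is sound once the gap above is repaired, and it has the merit of not needing condition (2) at all. Two small points should be made explicit, though: ${\frak I}$ is \emph{generated} by those elements, so you should observe that their linear span is already a two-sided ideal (by \eqref{equi} and the Super Leibniz identity) before claiming it spans ${\frak I}$; and the projection of such a generator onto ${\frak L}_c^{\bar k}$ may be $[x,y]$ alone (when $a\star b\neq b\star a$) or a linear combination of several such terms, rather than a single symmetrized generator --- your case analysis still applies summand by summand, so this is an imprecision rather than an error.
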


\begin{proof}
Since $I \nsubseteq {\frak L}_{\mathfrak{o}} + {\frak I}$, there exists $a_0 \in \frak{S}_{\neg {\frak I}}^{{\bar i}_0}$ such that
\begin{equation}\label{I2}
0 \neq {\frak L}_{a_0}^{{\bar i}_0} \subset I.
\end{equation}
for some ${\bar i}_0 \in {\hu Z}_2$ (see Equation
(\ref{Llargo})). Given now any $a \in \frak{S}_{\neg {\frak I}}^{\bar j} \setminus \{a_0\}$ with ${\bar j} \in {\hu Z}_2$, being then $0 \neq {\frak L}_a^{\bar j}$, the fact that $a_0$ and $a$ are ${\neg {\frak I}}$-connected gives us a ${\neg {\frak I}}$-connection $\{r_1,r_2,\dots,r_n\} $ from $a_0$ to $a$ such that $$r_1 = a_0 \in \frak{S}^{{\bar
i}_0}_{\neg{\frak I}}, \hspace{0.1cm} r_k \in \frak{S}^{{\bar i}_k}_{\neg{\frak I}} \hspace{0.05cm} {\rm for} \hspace{0.05cm} k=2,\dots,n,$$
$$r_1 \star r_2 \in \frak{S}^{{\bar i_0} + {\bar i_2}}_{\neg {\frak I}}, \dots, r_1 \star r_2 \star \cdots \star r_{n-1} \in \frak{S}^{{\bar i_0} + {\bar i_2} + \cdots + {\bar i_{n-1}}}_{\neg{\frak
I}} \hspace{0.1cm} {\rm and \hspace{0.2cm} finally}$$
$$r_1 \star r_2 \star \dots \star r_{n-1} \star r_n \in \frak{S}^{{\bar i_0} + {\bar i_2} + \cdots + {\bar i_{n-1}} + {\bar i_n}}_{\neg{\frak I}},$$ with $r_1 \star r_2 \star \cdots \star r_{n-1} \star r_n = a$ and ${\bar i_0}+ {\bar i_2} + \cdots + {\bar i_{n-1}} + {\bar i_n} = {\bar j}$.

Consider $r_1, r_2$ and $r_1 \star r_2$. Since
$r_1 = a_0 \in \frak{S}^{{\bar i}_0}_{\neg {\frak I}}$, $r_2 \in \frak{S}^{{\bar i}_2}_{\neg {\frak I}}$ and $r_1 \star r_2 \in \frak{S} ^{{\bar i_0} + {\bar i_2}}_{\neg {\frak I}}$, the $\frak{S}$-multiplicativity and maximal length of ${\frak L}$ show $$0 \neq [{\frak L}_{r_1}^{{\bar
i}_0}, {\frak L}_{r_2}^{{\bar i}_2}] = {\frak
L}_{r_1 \star r_2}^{{\bar i}_0+{\bar i}_2},$$ and by Equation (\ref{I2}) $$0 \neq {\frak L}_{r_1 \star r_2}^{{\bar i}_0+{\bar i}_2} \subset I.$$

We can argue in a similar way from $r_1 \star r_2, r_3$ and $r_1 \star r_2 \star r_3$. That is, $r_1 \star r_2 \in \frak{S}^{{\bar i}_0 + {\bar i}_2}_{\neg {\frak I}}, r_3 \in
\frak{S}^{{\bar i}_3}_{\neg {\frak I}}$ and
$r_1 \star r_2 \star r_3 \in \frak{S}^{{\bar i_0} + {\bar i_2}+ {\bar i_3}}_{\neg {\frak I}}.$ Hence $$0 \neq [{\frak L}_{r_1 \star r_2}^{{\bar i}_0 + {\bar i}_2}, {\frak L}_{r_3}^{{\bar i}_3}] = {\frak L}_{r_1 \star r_2 \star r_3}^{{\bar i}_0+{\bar i}_2 + {\bar i}_3},$$ and by the above $$0 \neq {\frak L}_{r_1 \star r_2 \star r_3}^{{\bar i}_0+{\bar i}_2+{\bar i}_3} \subset I.$$

Following this process with the ${\neg {\frak I}}$-connection $\{r_1,\dots,r_n\}$ we obtain that $$0 \neq{\frak L}_{r_1 \star r_2 \star r_3 \star \cdots \star r_n}^{{\bar i_0} + {\bar i_2} + \cdots + {\bar i_{n-1}} + {\bar i_n}} \subset I$$ and so ${\frak L}_a^{\bar j} \subset I$. In conclusion, for any $a \in \frak{S}_{\neg {\frak I}}^{\bar j} \setminus \{a_0\}$ with ${\bar j} \in {\mathbb Z}_2$, we have
\begin{equation}\label{ootraa}
{\frak L}_a^{\bar j} \subset I.
\end{equation}

Let us now verify that in case $0 \neq {\frak L}_{a_0}^{ \bar i_0 + \bar 1}$ for $  \bar i_0 + \bar 1 \in {\mathbb Z}_2$, we have $0 \neq {\frak L}_{a_0}^{ \bar i_0 + \bar 1} \subset I.$ Indeed, since $|\frak{S}_{\neg {\frak I}}|>1$, we can take $b \in \frak{S}_{\neg \frak I}^{\bar i}$, for some $\bar i \in {\mathbb Z}_2$, such that $b \neq a_0$. By Equation \eqref{ootraa}, it satisfies $0 \neq {\frak L}_b^{\bar i} \subset I$. Hence we can argue as above with the $\frak{S}$-multiplicativity and maximal length of ${\frak L}$ from $b$ instead of $a_0$, to get that in case $a_0 \in \frak{S}_{\neg \frak I}^{\bar k}$ for ${\bar k} \in {\mathbb Z}_2$, then
\begin{equation}\label{lara}
0 \neq {\frak L}_{a_0}^{\bar k} \subset I.
\end{equation}

Since $\frak{L}_{\mathfrak{o}} = \sum_{a,b \in \frak{S}, a \star b = \{\mathfrak{o}\}} [{\frak L}_a, {\frak L}_b]$, Remark \ref{re3.1} and Equations (\ref{ootraa}), (\ref{lara}) give us \begin{equation}\label{lah}
{\frak L}_{\mathfrak{o}} \subset I.
\end{equation}

Now consider $a \in \mathfrak{S}_{\mathfrak I}^{\bar i},$ for any $\bar{i} \in \mathbb{Z}_2$. For all $b \in \mathfrak{S}_{\neg \mathfrak{I}}^{\bar j},$ with $\bar{j} \in \mathbb{Z}_2,$ we have $[\mathfrak{L}_a^{\bar i}, \mathfrak{L}_b^{\bar j}] = \mathfrak{L}_d^{\bar i+\bar j}.$ If $d = \mathfrak{o},$ by \eqref{lah} we get $[\mathfrak{L}_a^{\bar i}, \mathfrak{L}_b^{\bar j}] \subset I.$ Otherwise, $d = a \star b \in \mathfrak{S}_{\mathfrak I}^{\bar i+\bar j}.$ So the $\neg \mathfrak{I}$-connection $\{a,b\}$ implies that  $d \sim_{\neg \mathfrak{I}} a$ and therefore $\mathfrak{L}_a^{\bar i} \subset I$ for any ${\bar i} \in \mathbb{Z}_2.$ So $\mathfrak{I} \subset I.$  The decomposition of ${\frak L}$ in Equation (\ref{Llargo}) finally gives us $I={\frak L}$.
\end{proof}

Let us introduce an interesting notion related to a set-graded Leibniz superalgebra of maximal length ${\frak L}$. We wish to distinguish the  elements of ${\frak L}$ which annihilate the ``Lie type elements'' of ${\frak I}$, so we have the
following definition.

\begin{definition}\label{centerlie}\rm
The {\it Lie-annihilator} of a set-graded Leibniz superalgebra of maximal length ${\frak L}$ is the set $$\hbox{${\mathcal Z}_{Lie}({\frak L}) := \Bigl\{ x \in {\frak L} : [x, {\frak L}_a] + [{\frak
L}_a,x]=0,$ for any $a \notin \frak{S}_{\frak I}\Bigr\}$}.$$
\end{definition}

\noindent Observe that ${\mathcal Z}({\frak L}) \subset {\mathcal Z}_{Lie}({\frak L}).$

\begin{proposition}\label{propoI}
Suppose that  ${\frak L}$ is $\frak{S}$-multiplicative, ${\frak L}_{\mathfrak{o}} = \sum_{b,c \in \frak{S} \setminus \{\mathfrak{o}\}, b \star c = \{\mathfrak{o}\}} [{\frak L}_b, {\frak L}_c]$,  ${\mathcal Z}_{Lie}({\frak L})=0$, $|\frak{S}_{\frak I}|>1$ and $\frak{S}_{\frak I}$ has all of its elements ${\neg {\frak I}}$-connected. Then any nonzero ideal $I$ of ${\frak L}$ such that $I \subset {\frak I}$ satisfies $I = {\frak I}$.
\end{proposition}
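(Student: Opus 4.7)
My plan is to show that any nonzero ideal $I \subset {\frak I}$ exhausts ${\frak I}$ in three stages: first I propagate $I$ to cover every graded component ${\frak L}_a^{\bar j}$ indexed by $a \in {\frak S}_{\frak I}$, then I absorb the residual piece ${\frak I} \cap {\frak L}_{\mathfrak{o}}$, and finally I dispose of the degenerate possibility that $I$ sits entirely inside ${\frak L}_{\mathfrak{o}}$.

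First I would apply Lemma \ref{lema5} together with the maximal length hypothesis and the containment $I \subset {\frak I}$ to decompose
$$I = (I \cap {\frak L}_{\mathfrak{o}}) \oplus \bigoplus_{a \in {\frak S}_I^{\bar 0}} {\frak L}_a^{\bar 0} \oplus \bigoplus_{b \in {\frak S}_I^{\bar 1}} {\frak L}_b^{\bar 1},$$
with ${\frak S}_I^{\bar i} \subset {\frak S}_{\frak I}^{\bar i}$ for $\bar i \in {\hu Z}_2$. If some $a_0 \in {\frak S}_I^{\bar i_0}$ exists, I would fix one and, for each $a \in {\frak S}_{\frak I}^{\bar j}$, choose a $\neg{\frak I}$-connection $\{r_1,\ldots,r_n\}$ from $a_0$ to $a$, which exists because ${\frak S}_{\frak I}$ has all its elements $\neg{\frak I}$-connected. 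Since each $r_k$ with $k \geq 2$ lies in ${\frak S}_{\neg{\frak I}}$ and every partial product lies in ${\frak S}_{\frak I}$, condition (2) of ${\frak S}$-multiplicativity applies at each step, and the inductive chain of inclusions used in the proof of Proposition \ref{nueva1} transfers verbatim to yield ${\frak L}_a^{\bar j} \subset I$ for every $a \in {\frak S}_{\frak I}^{\bar j}$ and every $\bar j \in {\hu Z}_2$. Combined with the decomposition \eqref{I} of ${\frak I}$, this gives ${\frak I} = ({\frak I} \cap {\frak L}_{\mathfrak{o}}) + I$.

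Next I would absorb the residual piece ${\frak I} \cap {\frak L}_{\mathfrak{o}}$. Given $z \in {\frak I} \cap {\frak L}_{\mathfrak{o}}$, Equation \eqref{equi} already gives $[{\frak L}_a, z] = 0$ for every $a$. For any $a \in {\frak S}_{\neg{\frak I}}^{\bar k}$ set $c := \mathfrak{o} \star a$; the distinguished character of $\mathfrak{o}$ forces $c \neq \mathfrak{o}$, and the product $[z^{\bar i}, {\frak L}_a^{\bar k}]$ lies in ${\frak L}_c^{\overline{i+k}} \cap {\frak I}^{\overline{i+k}}$, which is zero when $c \in {\frak S}_{\neg{\frak I}}^{\overline{i+k}}$ and, by the previous stage, already contained in $I$ when $c \in {\frak S}_{\frak I}^{\overline{i+k}}$. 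Thus $[z,{\frak L}_a] \subset I$ for every $a \in {\frak S}_{\neg{\frak I}}$. Using the one-dimensionality supplied by maximal length, I would then construct a correction $w \in I \cap {\frak L}_{\mathfrak{o}}$ satisfying $[w,{\frak L}_a] = [z,{\frak L}_a]$ for every $a \in {\frak S}_{\neg{\frak I}}$; the difference $z - w$ both annihilates and is annihilated by every ${\frak L}_a$ with $a \in {\frak S}_{\neg{\frak I}}$, so $z - w \in {\mathcal Z}_{Lie}({\frak L}) = 0$ and hence $z = w \in I$.

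Finally, in the degenerate case $I \subset {\frak L}_{\mathfrak{o}}$ any nonzero $z \in I$ satisfies $[z,{\frak L}_a] \subset {\frak L}_c \cap I \subset {\frak L}_c \cap {\frak L}_{\mathfrak{o}} = 0$ for every $a \in {\frak S}_{\neg{\frak I}}$ (because $c = \mathfrak{o} \star a \neq \mathfrak{o}$) while $[{\frak L}_a, z] = 0$, placing $z$ in ${\mathcal Z}_{Lie}({\frak L}) = 0$, a contradiction. I expect the hard part to be the construction of the corrector $w$ in the second stage: one must combine maximal length, ${\frak S}$-multiplicativity, and the hypothesis $|{\frak S}_{\frak I}| > 1$ to guarantee that each problematic product $[z,{\frak L}_a]$ landing in some ${\frak L}_c^{\overline{i+k}}$ with $c \in {\frak S}_{\frak I}$ can be independently realized as $[w,{\frak L}_a]$ for a single $w \in I \cap {\frak L}_{\mathfrak{o}}$, so that $z - w$ genuinely lies in the Lie-annihilator.
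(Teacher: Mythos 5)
Your overall skeleton (decompose $I$ via Lemma \ref{lema5}, then propagate along $\neg{\frak I}$-connections using $\frak{S}$-multiplicativity and maximal length) matches the paper's, but your second stage contains a genuine gap, and it is one the paper avoids entirely by reordering the steps. The paper's \emph{first} move is to show ${\frak I} \cap {\frak L}_{\mathfrak{o}} \subset {\mathcal Z}_{Lie}({\frak L})$, hence ${\frak I} \cap {\frak L}_{\mathfrak{o}} = 0$: for $z \in {\frak I} \cap {\frak L}_{\mathfrak{o}}^{\bar i}$ and $a \notin \frak{S}_{\frak I}$, one has $[{\frak L}_a, z] = 0$ automatically by Equation \eqref{equi}, while $[z, {\frak L}_a^{\bar j}]$ lies in ${\frak L}_{b}^{\bar i + \bar j} \cap {\frak I}$ with $b = a \star \mathfrak{o} \neq \mathfrak{o}$; were this nonzero, maximal length would give $b \in \frak{S}_{\frak I}$, and the connection $\{a,\mathfrak{o}\}$ would then force $a \in \frak{S}_{\frak I}$, a contradiction. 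Consequently ${\frak I}$, and a fortiori $I$, has no component in ${\frak L}_{\mathfrak{o}}$, there is nothing to ``absorb'', and your corrector $w$ is forced to be $0$ in any case, since $w \in I \cap {\frak L}_{\mathfrak{o}} \subset {\frak I} \cap {\frak L}_{\mathfrak{o}} = 0$. In other words, the situation your corrector is designed to handle --- a nonzero $[z,{\frak L}_a]$ landing in some ${\frak L}_c$ with $c \in \frak{S}_{\frak I}$ --- cannot occur, and the construction you defer as ``the hard part'' is both unnecessary and unrealizable with nonzero $w$; you give no mechanism for producing elements of $I \cap {\frak L}_{\mathfrak{o}}$ at all. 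What is actually needed at that point is precisely the vanishing argument above, which your proposal never supplies.

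A second, smaller omission: in stage 1 you claim ${\frak L}_a^{\bar j} \subset I$ for \emph{every} $a \in \frak{S}_{\frak I}^{\bar j}$ and every $\bar j$, but when $a = a_0$ and $\bar j = \bar i_0 + \bar 1$ the only $\neg{\frak I}$-connection available from your argument is the trivial one from $a_0$ to itself, which gives no control over the opposite-parity component ${\frak L}_{a_0}^{\bar i_0 + \bar 1}$. This is exactly where the hypothesis $|\frak{S}_{\frak I}| > 1$ enters in the paper: one picks $b \in \frak{S}_{\frak I} \setminus \{a_0\}$, which by the first part already satisfies $0 \neq {\frak L}_b^{\bar i} \subset I$, and reruns the connection argument from $b$ back to $a_0$. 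Your proposal never invokes $|\frak{S}_{\frak I}| > 1$, which is a sign this case was missed; without it the identity ${\frak I} = ({\frak I} \cap {\frak L}_{\mathfrak{o}}) + I$ on which your later stages rest is not yet established.
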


\begin{proof}
By Equation \eqref{max} we can write  $$I = (I \cap {\frak L}_{\mathfrak{o}}) \oplus (\bigoplus\limits_{b \in \frak{S}_I^{\bar 0}} {{\frak L}_b^{\bar 0}}) \oplus (\bigoplus\limits_{c \in \frak{S}_I^{\bar 1}} {{\frak L}_c^{\bar 1}})$$ where $\frak{S}^{\bar i}_I := \{d \in \frak{S} : I^{\bar{i}} \cap {\frak L}_d^{\bar i} \neq 0\} = \{d \in \frak{S} : 0 \neq {\frak L}_d^{\bar i} \subset I^{\bar i}\}$ and $\frak{S}_I^{\bar i} \subset \frak{S}_{{\frak I}}^{\bar i}$, for any ${\bar i} \in {\mathbb Z}_2$.

First, we show that \begin{equation}\label{equhache}
{\frak I} \cap {\frak L}_{\mathfrak{o}} \subset {\mathcal Z}_{Lie}({\frak L}).
\end{equation}
\noindent Indeed, fix some ${\bar i} \in {\mathbb Z}_2$ and for any $a \notin \frak{S}_{\frak I}$ and $\bar j \in {\mathbb Z}_2$ we have $$[{\frak I} \cap {\frak L}_{\mathfrak{o}}^{\bar i}, {\frak L}_a^{\bar j}] + [{\frak L}_a^{\bar j}, {\frak I} \cap {\frak L}_{\mathfrak{o}}^{\bar i}] \subset {\frak L}_b^{\bar i + \bar j} \subset {\frak I}$$
where  $b = a \star \mathfrak{o}$.
Let us suppose that  $[{\frak I} \cap {\frak L}_{\mathfrak{o}}^{\bar i}, {\frak L}_a^{\bar j}] + [{\frak L}_a^{\bar j}, {\frak I} \cap {\frak L}_{\mathfrak{o}}^{\bar i}] \neq 0$.
We have $b \in \frak{S}_{\frak I}$.
So $a$ is connected with $b$ using $\{a,\mathfrak{o}\}$ and $a \in \frak{S}_{\frak I}$, a contradiction.
So $[{\frak I} \cap {\frak L}_{\mathfrak{o}}^{\bar i}, {\frak L}_a^{\bar j}] + [{\frak L}_a^{\bar j}, {\frak I} \cap {\frak L}_{\mathfrak{o}}^{\bar i}] = 0$ for each ${\bar i}, {\bar j} \in {\mathbb Z}_2$ and    $a \notin \frak{S}_{\frak I}$. So ${\frak I} \cap {\frak L}_{\mathfrak{o}} \subset {\mathcal Z}_{Lie}({\frak L}).$

From the above ${\frak I} \cap \frak{L}_{\mathfrak{o}} \subset {\mathcal Z}_{Lie}({\frak L})=0$ and we can write
\begin{equation}\label{afinal}
{\frak I} = (\bigoplus\limits_{a \in \frak{S}_{\frak I}^{\bar 0}} {\frak L}_a^{\bar 0}) \oplus (\bigoplus\limits_{b \in \frak{S}_{\frak I}^{\bar 1}} {\frak L}_b^{\bar 1}).
\end{equation}
Taking into account $I \cap \frak{L}_{\mathfrak{o}} \subset {\frak I} \cap \frak{L}_{\mathfrak{o}} = 0$, we also can write
$$I = (\bigoplus\limits_{a \in \frak{S}_I^{\bar 0}} {\frak L}_a^{\bar 0}) \oplus (\bigoplus\limits_{b \in \frak{S}_I^{\bar 1}} {\frak L}_b^{\bar 1}),$$ with  $\frak{S}_I^{\bar i} \subset \frak{S}_{\frak I}^{\bar i}$ for ${\bar i}  \in {\mathbb Z}_2$. Hence, we can take some $a_0 \in \frak{S}_I^{\bar i}$ such that
\begin{equation}\label{betacero}
0 \neq {\frak L}_{a_0}^{\bar i} \subset I.
\end{equation}
Now, we can argue with the $\frak{S}$-multiplicativity and the maximal length of ${\frak L}$ as in Proposition \ref{nueva1}  to conclude that given any $a \in \frak{S}_{\frak I}^{\bar j} \setminus \{a_0\}$, with $\bar j \in {\mathbb Z}_2$, there exists a ${\neg {\frak I}}$-connection
\begin{equation*}\label{cone}
\{r_1,r_2,\dots ,r_n\}
\end{equation*}
from $a_0$ to $a$ such that
\begin{equation}\label{1003}
0 \neq [[\dots[{\frak L}_{a_0}^{\bar i}, {\frak
L}_{r_2}^{{\bar i}_2}], \dots], {\frak L}_{r_n}^{\bar{i}_n}] = {\frak L}_a^{\bar j} \subset I.
\end{equation}
Now we have to study whether ${\frak L}_{a_0}^{\bar k} \subset I$ in case $a_0 \in \frak{S}_{\frak I}^{\bar k}$ for $\bar k \in {\mathbb Z}_2$. To do that, observe that the fact $|\frak{S}_{\frak I}|>1$ allows us to take an element $b \in \frak{S}_{\frak I}^{\bar i} \setminus \{a_0\}$, for some $\bar i \in {\mathbb Z}_2$, such that  satisfies $0 \neq {\frak L}_b^{\bar i} \subset I$, by  Equation \eqref{1003}. Hence we can argue as above with the $\frak{S}$-multiplicativity and maximal length of ${\frak L}$ from $b$ instead of $a_0$, to get that in case $a_0 \in \frak{S}_{\frak I}^{\bar k}$ for ${\bar k} \in {\mathbb Z}_2$, then $0 \neq {\frak L}_{a_0}^{\bar k} \subset I.$
The decomposition of ${\frak I}$ in Equation (\ref{afinal}) finally gives us $I={\frak I}$.
\end{proof}

\begin{theorem}\label{last}
Suppose that  a set-graded Leibniz superalgebra ${\frak L}$  of maximal length is $\frak{S}$-multiplicative,
$|\frak{S}_{\neg{\frak I}}|>1, |\frak{S}_{\frak I}|>1$,
${\frak L}_{\mathfrak{o}} = \sum_{b,c \in \frak{S} \setminus \{\mathfrak{o}\}, b \star c = \{\mathfrak{o}\}} [{\frak L}_b, {\frak L}_c]$ and ${\mathcal Z_{Lie}}(\frak L) =0.$  In this conditions,  ${\frak L}$ is simple if and only if $\frak{S}_{\frak I}$ and $\frak{S}_{\neg {\frak I}}$ have (respectively) all of their elements ${\neg {\frak I}}$-connected.
\end{theorem}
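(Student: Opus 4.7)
The plan is to prove the two implications of the theorem separately, reducing the sufficiency to Propositions \ref{nueva1} and \ref{propoI}, and handling the necessity by an ideal-construction argument mirroring Theorem \ref{teo1}.

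For the sufficiency ($\Leftarrow$), I let $I$ be a nonzero graded ideal of $\frak{L}$. By Lemma \ref{lema5} and the maximal-length decomposition \eqref{Llargo}, the argument splits into cases. First, if $I \nsubseteq \frak{L}_{\mathfrak{o}} + \frak{I}$, Proposition \ref{nueva1} applies directly (its hypotheses coincide with ours) and yields $I = \frak{L}$. Otherwise $I \subseteq \frak{L}_{\mathfrak{o}} + \frak{I}$, and since $\frak{L}_a \cap (\frak{L}_{\mathfrak{o}} + \frak{I}) = \frak{L}_a \cap \frak{I}$ for every $a \in \frak{S} \setminus \{\mathfrak{o}\}$, each non-$\mathfrak{o}$ homogeneous component of $I$ lies inside $\frak{I}$. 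If moreover $I \cap \frak{L}_{\mathfrak{o}} = 0$, then $I \subseteq \frak{I}$ and Proposition \ref{propoI} gives $I = \frak{I}$. The remaining situation ($0 \neq I \cap \frak{L}_{\mathfrak{o}}$ while $I \subseteq \frak{L}_{\mathfrak{o}} + \frak{I}$) must be excluded: pick $0 \neq x \in I \cap \frak{L}_{\mathfrak{o}}$; since ${\mathcal Z}_{Lie}(\frak{L}) = 0$, there is $a \in \frak{S} \setminus \frak{S}_{\frak I}$ with $[x, \frak{L}_a] + [\frak{L}_a, x] \neq 0$. Distinguishing the cases $a \in \frak{S}_{\neg{\frak I}}$ and $a = \mathfrak{o}$, and using the property $\mathfrak{o} \star a \neq \{\mathfrak{o}\}$ of the distinguished element in the former case (and re-expressing $x$ via $\frak{L}_{\mathfrak{o}} = \sum [\frak{L}_b, \frak{L}_c]$ together with the Super Leibniz identity and $[\frak{L}, \frak{I}] = 0$ in the latter), this nonzero bracket produces a nonzero homogeneous element of $I$ inside some $\frak{L}_c^{\bar k}$ with $c \in \frak{S}_{\neg{\frak I}}^{\bar k}$, contradicting the subcase assumption.

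For the necessity ($\Rightarrow$), assume $\frak{L}$ is simple. Fix $a_0 \in \frak{S}_{\neg{\frak I}}^{\bar i_0}$ and define the graded subspace $J$ as the sum of $\frak{L}_b^{\bar j}$ over all $b \in \frak{S}_{\neg{\frak I}}^{\bar j}$ that are $\neg{\frak I}$-connected to $a_0$, together with the associated $\frak{S}_{\frak I}$-, $\mathfrak{o}$-, and $\frak{I}$-components generated by these in the fashion of Theorem \ref{teo1}. Using both clauses of $\frak{S}$-multiplicativity (Definition \ref{defmulti}), combined with $[\frak{L}, \frak{I}] = 0$ and $\frak{L}_{\mathfrak{o}} = \sum [\frak{L}_b, \frak{L}_c]$, the subspace $J$ is shown to be a graded ideal of $\frak{L}$; it contains $\frak{L}_{a_0}^{\bar i_0} \not\subseteq \frak{I}$, so simplicity rules out $J = 0$ and $J = \frak{I}$, forcing $J = \frak{L}$, which means every element of $\frak{S}_{\neg{\frak I}}$ is $\neg{\frak I}$-connected to $a_0$. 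The parallel argument for $\frak{S}_{\frak I}$ starts from $a_0 \in \frak{S}_{\frak I}^{\bar i_0}$ and constructs the analogous $J$; by Definition \ref{def_cone_S} any $\neg{\frak I}$-connection starting in $\frak{S}_{\frak I}$ remains inside $\frak{S}_{\frak I}$, so this time $J \subseteq \frak{I}$ and is nonzero, whence simplicity gives $J = \frak{I}$, yielding full $\neg{\frak I}$-connectivity of $\frak{S}_{\frak I}$.

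The main technical obstacle is the excluded subcase of sufficiency. Here the combination of ${\mathcal Z}_{Lie}(\frak{L}) = 0$, $[\frak{L}, \frak{I}] = 0$, and the Super Leibniz identity must be wielded carefully: the case split on whether the element $a$ supplied by the centerless hypothesis equals $\mathfrak{o}$ or lies in $\frak{S}_{\neg{\frak I}}$ is delicate, and the $a = \mathfrak{o}$ branch requires re-expressing $x$ via the generating decomposition $\frak{L}_{\mathfrak{o}} = \sum [\frak{L}_b, \frak{L}_c]$ before a contradiction emerges. In the necessity direction, the delicate step is verifying that $J$ is genuinely an ideal, which demands the full force of $\frak{S}$-multiplicativity in both its forms, together with careful tracking of parities under the Super Leibniz identity.
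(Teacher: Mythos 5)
Your sufficiency argument follows the paper's architecture (the case split on whether $I \nsubseteq {\frak L}_{\mathfrak{o}} + {\frak I}$, then Propositions \ref{nueva1} and \ref{propoI}), but the step where you exclude the subcase $0 \neq I \cap {\frak L}_{\mathfrak{o}}$ has a genuine gap. Having chosen $0 \neq x \in I \cap {\frak L}_{\mathfrak{o}}$ and $a \notin \frak{S}_{\frak I}$ with $[x,{\frak L}_a] + [{\frak L}_a,x] \neq 0$, you claim this bracket ``produces a nonzero homogeneous element of $I$ inside some ${\frak L}_c^{\bar k}$ with $c \in \frak{S}_{\neg{\frak I}}^{\bar k}$,'' contradicting $I \subseteq {\frak L}_{\mathfrak{o}} + {\frak I}$. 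But the bracket lands in $I \cap {\frak L}_b$ with $b = a \star \mathfrak{o} \neq \mathfrak{o}$, and nothing forces $b$ into $\frak{S}_{\neg{\frak I}}$; on the contrary, the subcase hypothesis gives $0 \neq I \cap {\frak L}_b \subseteq {\frak I} \cap {\frak L}_b$, i.e.\ it forces $b \in \frak{S}_{\frak I}$, so the contradiction you aim for never materializes. The paper accepts $b \in \frak{S}_{\frak I}$ and gets its contradiction from the other end: the connection $\{a,\mathfrak{o}\}$ from $a$ to $b \in \frak{S}_{\frak I}$ yields $a \in \frak{S}_{\frak I}$, against the choice of $a$. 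That step (or an equivalent) is what your argument is missing; the rest of your frame (showing $I \cap {\frak L}_{\mathfrak{o}} = 0$, hence $I \subseteq {\frak I}$ by Lemma \ref{lema5}, hence $I = {\frak I}$ by Proposition \ref{propoI}) agrees with the paper up to contraposition.

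For necessity you take a genuinely different route: you build the $\neg{\frak I}$-connected-component subspace $J$ of a fixed $a_0$ and argue that it is an ideal, whereas the paper considers the ideal $I({\frak L}_a^{\bar i})$ generated by a single homogeneous component, uses simplicity to conclude $I({\frak L}_a^{\bar i}) = {\frak L}$, and reads off a $\neg{\frak I}$-connection to any $a'$ from the explicit description of $I({\frak L}_a^{\bar i})$ as a span of iterated brackets (checking via $[{\frak L},{\frak I}]=0$ that all intermediate factors and products may be taken in $\frak{S}_{\neg{\frak I}}$). Your route is plausible but shifts all the work into the unverified claim that $J$ is an ideal: you must take $J \cap {\frak L}_{\mathfrak{o}}$ to be only the part of ${\frak L}_{\mathfrak{o}}$ generated by the connected components (not all of ${\frak L}_{\mathfrak{o}}$, or closure fails on $[{\frak L}_{\mathfrak{o}},{\frak L}_e]$ for unconnected $e$), and you must redo the closure arguments of Theorem \ref{teo1} for the finer relation $\sim_{\neg{\frak I}}$, whose transitivity the paper never establishes. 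The paper's generated-ideal argument avoids this and, notably, does not need $\frak{S}$-multiplicativity in this direction.
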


\begin{proof}
First assume that ${\frak L}$ is simple. Fix some $a \in \frak{S}_{\neg {\frak I}}^{\bar i}$ with ${\bar i} \in {\mathbb Z}_2$. Let us denote by $I({\frak L}_a^{\bar i})$ the set-graded ideal of ${\frak L}$ generated by ${\frak L}_a^{\bar i}$. By simplicity $I({\frak
L}_a^{\bar i}) = {\frak L}.$ Observe that Remark
\ref{re3.1} together with Super Leibniz identity allow us to assert that $I({\frak L}_a^{\bar i})$ is contained in the linear span of the set
$$\Bigl\{    [[\dots[v_a, v_{b_1}],\dots],v_{b_n}]; \hspace{0.1cm}
[v_{b_n},[\dots[v_a, v_{b_1}],\dots]]
;$$
$$\hbox{$[[\dots[v_{b_1}, v_a],\dots], v_{b_n}]; \hspace{0.1cm}
[v_{b_n},[\dots[v_{b_1},v_a],\dots]]$ with $0 \neq v_a \in {\frak L}_a^{\bar i}$,}$$
$$\hbox{$0 \neq v_{b_i} \in {\frak L}_{b_i}^{\bar j_i}, b_i \in \frak{S}$, ${\bar j_i} \in {\mathbb Z}_2$ and $n \in {\hu N} \Bigr\}$ }$$ being $a \star b_1, a \star b_1 \star b_2,\dots, a \star b_1 \star b_2 \star \cdots \star b_n$ nonzero elements. From here, given any $a' \in
\frak{S}_{\neg {\frak I}}^{\bar j}$  with ${\bar j} \in {\mathbb Z}_2$,  the above observation gives us that we can write $a' = a \star b_1 \star \cdots \star b_n$ with any $b_i \in \frak{S}_{\neg \frak I}^{\bar j_i}$ and being $a \star b_1 \star \cdots \star b_k \in \frak{S}_{\neg \frak
I}^{\bar i + \bar j_1 + \cdots + \bar j_k}$ (observe that in case some $b_i \in \frak{S}_{\frak I}^{\bar j_i}$ or some ``sum" of elements belongs to $\frak{S}_{\frak I}^{\bar h}$, then either the product involving $v_{b_i}$ or the ``sum" is $\mathfrak{o}$, or implies $a' \in \frak{S}_{\frak I}^{\bar j}$. Hence any $b_i \in \frak{S}_{\neg{\frak I}}^{\bar j_i}$ and the ``sums" are in $\frak{S}_{\neg{\frak I}}^{\bar h}$). From here, we have that $\{a, b_1,\dots, b_n\}$ is a ${\neg{\frak I}}$-connection from $a$ to $a'$ and we can assert that
$\hbox{$\frak{S}_{\neg{\frak I}}$ has all of its elements ${\neg {\frak I}}$-connected.}$
If $\frak{S}_{\frak I} \neq \emptyset$. A similar argument as above implies that  $\frak{S}_{\frak I}$ has all of
its elements ${\neg {\frak I}}$-connected.

Conversely, consider $I$ a nonzero  ideal of ${\frak L}$ and let us show that necessarily either $I = {\frak I}$ or $I = {\frak L}$. Let us distinguish two possibilities:
\begin{itemize}
\item If $I \nsubseteq {\frak L}_{\mathfrak{o}} + {\frak I}$, from Proposition \ref{nueva1}
we obtain that  $I = {\frak L}$.

\item If $I \subset {\frak L}_{\mathfrak{o}} + {\frak I}$, observe that $I \cap {\frak L}_{\mathfrak{o}} \subset {\mathcal Z_{Lie}}(\frak L)$. Indeed, for any $a \notin \frak{S}_{\frak I}$, with $a \neq \mathfrak{o}$ and $\bar i, \bar j \in {\mathbb Z}_2$ we have $[I \cap {\frak L}_{\mathfrak{o}}^{\bar i}, {\frak L}_a^{\bar j}] + [{\frak L}_a^{\bar j}, I \cap {\frak L}_{\mathfrak{o}}^{\bar i}] \subset I \cap
{\frak L}_b^{\bar i + \bar j}  \subset {\frak I}$, where $b = a \star \mathfrak{o} \neq \mathfrak{o}.$ So as above $a$ is connected with $b$, a contradiction with $a \notin \frak{S}_{\frak I}$. So $[I \cap {\frak L}_{\mathfrak{o}}^{\bar i}, {\frak L}_a^{\bar j}] + [{\frak L}_a^{\bar j}, I \cap {\frak L}_{\mathfrak{o}}^{\bar i}]=0$. From here $$I \cap {\frak L}_{\mathfrak{o}} \subset {\mathcal Z_{Lie}}(\frak L)=0.$$
Hence Lemma \ref{lema5} gives us $I \subset {\frak I}$. Finally  Proposition \ref{propoI} implies $I = {\frak I}$ and the proof is complete.
\end{itemize}
\end{proof}
\noindent It remains to study the cases in which either $|\frak{S}_{\neg {\frak I}}| \leq 1$ or $|\frak{S}_{{\frak I}}| \leq 1.$

\begin{proposition}\label{cardinal2}
Suppose that  a set-graded Leibniz superalgebra ${\frak L}$ of maximal length is $\frak{S}$-multiplicative,
${\frak L}_{\mathfrak{o}} = \sum_{b,c \in \frak{S} \setminus \{\mathfrak{o}\}, b \star c = \{\mathfrak{o}\}} [{\frak L}_b, {\frak L}_c]$, ${\mathcal Z_{Lie}}(\frak L) =0$  and $\frak{S}_{\neg {\frak I}}$, $\frak{S}_{\frak I}$  have (respectively) all of their elements ${\neg {\frak I}}$-connected. If either $|\frak{S}_{\neg{\frak I}}| \leq 1$ or $|\frak{S}_{{\frak I}}| \leq 1$, then:

\begin{enumerate}
\item[{\rm (1)}]  either ${\frak L}$ is a simple set-graded Leibniz superalgebra,  

\item[{\rm (2)}] or ${\frak L} = {\frak L}_{\mathfrak{o}} \oplus I \oplus \frak{I},$ where $I = \frak{L}_a$ is a one-dimensional 
subalgebra of ${\frak L} $
with $\frak{S}_{\neg \frak I} = \{a\}$.
\end{enumerate}
\end{proposition}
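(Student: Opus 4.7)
The strategy is to imitate the proof of Theorem \ref{last} and carefully locate the two spots where the hypotheses $|\frak{S}_{\neg \frak I}|>1$ and $|\frak{S}_{\frak I}|>1$ were invoked in Propositions \ref{nueva1} and \ref{propoI}; when one of these fails, either the propagation still closes (so $\frak L$ is simple) or it stalls on a single one-dimensional homogeneous component, which will furnish the ideal $I$ of case~(2). Assume $\frak L$ is not simple and fix a proper nonzero ideal $J$ of $\frak L$ with $J\ne\frak I$; the task is to force the decomposition in~(2).

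Following Theorem \ref{last}, I split the analysis according to whether or not $J\subseteq\frak L_{\mathfrak{o}}+\frak I$. In the affirmative case, the Lie-annihilator computation used at the end of Theorem \ref{last} yields $J\cap\frak L_{\mathfrak{o}}\subset{\mathcal Z}_{Lie}(\frak L)=0$, whence $J\subset\frak I$; when $|\frak{S}_{\frak I}|>1$, Proposition \ref{propoI} forces $J=\frak I$, contradicting $J\ne\frak I$. When $|\frak{S}_{\frak I}|\le 1$, the proof of Proposition \ref{propoI} still runs inside $\frak I=\bigoplus_{c\in\frak{S}_{\frak I}}\frak L_c$, which now consists of at most one graded summand of total dimension at most two; the $\neg\frak I$-connectedness of $\frak{S}_{\frak I}$ together with $\frak S$-multiplicativity still saturate $J$ inside this summand, and $J=\frak I$ is again forced. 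So this case is impossible and we may assume $J\not\subseteq\frak L_{\mathfrak{o}}+\frak I$.

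Under this assumption the initial part of the proof of Proposition \ref{nueva1} produces $a_0\in\frak{S}_{\neg \frak I}^{\bar i_0}$ with $\frak L_{a_0}^{\bar i_0}\subset J$, and $\frak S$-multiplicativity combined with the $\neg\frak I$-connectedness of $\frak{S}_{\neg \frak I}$ propagates $\frak L_b^{\bar j}\subset J$ for every $b\in\frak{S}_{\neg \frak I}^{\bar j}$. When $|\frak{S}_{\neg \frak I}|>1$, the rest of Proposition \ref{nueva1} (use a second element of $\frak{S}_{\neg \frak I}$ to retrieve the opposite parity at $a_0$, then $\frak L_{\mathfrak{o}}\subset J$ by the assumption on $\frak L_{\mathfrak{o}}$, then $\frak I\subset J$) gives $J=\frak L$, a contradiction. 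Hence $|\frak{S}_{\neg \frak I}|=1$, say $\frak{S}_{\neg \frak I}=\{a\}$. If $\frak L_a$ had both parities nonzero, $\frak S$-multiplicativity with $r\in\{a,\tilde a\}$ would still allow one to cross the parities internally to $\frak L_a$, and running the same chain again would yield $J=\frak L$; hence $\dim\frak L_a=1$, and any strict extension of $\frak L_a$ inside $J$ would likewise force $J=\frak L$, so $J=\frak L_a$.

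Setting $I:=\frak L_a=J$, the decomposition $\frak L=\frak L_{\mathfrak{o}}\oplus I\oplus\frak I$ follows from \eqref{Llargo} specialised to $\frak{S}_{\neg \frak I}=\{a\}$ together with $\frak I\cap\frak L_{\mathfrak{o}}\subset{\mathcal Z}_{Lie}(\frak L)=0$, while $I$ is a subalgebra because $\dim I=1$ forces $[\frak L_a,\frak L_a]$ to lie either in $I$ itself or in $\frak L_{\mathfrak{o}}$, and one uses Equation~\eqref{equi} for the interaction with $\frak I$. The main obstacle will be precisely this last verification, together with the $|\frak{S}_{\frak I}|=1$ sub-case of the second paragraph: in both places one must rule out every way in which a bracket could produce a new homogeneous component outside $\frak L_{\mathfrak{o}}\cup I\cup\frak I$, using the defining property $\mathfrak{o}\star a\ne\{\mathfrak{o}\}$ and the fact that $\frak{S}_{\neg \frak I}=\{a\}$ collapses any non-$\frak I$ target of a bracket to $a$ itself. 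The rest of the proof is a mechanical adaptation of the bookkeeping already developed in Propositions \ref{nueva1} and \ref{propoI}.
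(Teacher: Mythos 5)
Your overall skeleton matches the paper's: split on whether the ideal lies in ${\frak L}_{\mathfrak{o}}+{\frak I}$, reuse Propositions \ref{nueva1} and \ref{propoI} where the cardinality hypotheses survive, and isolate the single component of $\frak{S}_{\neg{\frak I}}$ otherwise. But there is a genuine gap at exactly the point you flag as the ``main obstacle'' and then defer. When $J\subset{\frak I}$ and $|\frak{S}_{\frak I}|=1$, say $\frak{S}_{\frak I}=\{a\}$, you claim the saturation of Proposition \ref{propoI} ``still'' forces $J={\frak I}$. It does not: the only step of that proof which recovers the opposite-parity component ${\frak L}_{a_0}^{\bar k}$ is the one that picks a second element $b\in\frak{S}_{\frak I}\setminus\{a_0\}$, which is precisely what $|\frak{S}_{\frak I}|>1$ provides; the $\neg{\frak I}$-connection from $a$ to itself is the trivial one ($a=b$ in Definition \ref{def_cone_S}) and yields no bracket relation, and since ${\frak L}_a^{\bar i}\subset{\frak I}$ kills all left multiplications by Equation \eqref{equi}, no step guaranteed by the hypotheses reaches ${\frak L}_a^{\bar i+\bar 1}$ from ${\frak L}_a^{\bar i}$. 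The paper closes this case by a different and essential argument: it shows $|\frak{S}_{\frak I}|=1$ is outright impossible, because ${\mathcal Z}_{Lie}({\frak L})=0$ produces some $b\in\frak{S}_{\neg{\frak I}}^{\bar k}$ with $[{\frak L}_a^{\bar j},{\frak L}_b^{\bar k}]+[{\frak L}_b^{\bar k},{\frak L}_a^{\bar j}]\neq 0$, whence $a\star b\in\frak{S}_{\frak I}=\{a\}$ and $b=\mathfrak{o}$, contradicting $b\in\frak{S}_{\neg{\frak I}}$. That use of the Lie-annihilator hypothesis is the missing idea, and without it your second paragraph does not close.

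A second, smaller gap: in the case $\frak{S}_{\neg{\frak I}}=\{a\}$ you deduce $\dim{\frak L}_a=1$ by asserting that $\frak{S}$-multiplicativity with $r\in\{a,\tilde a\}$ lets you ``cross the parities internally to ${\frak L}_a$''. This presupposes $a\in a\star r$ for some parity-reversing $r$, which nothing in the hypotheses guarantees; as written your argument does not exclude $\dim{\frak L}_a=2$ with only one parity component absorbed into $J$. The paper instead reads off case (2), including the one-dimensionality of ${\frak L}_a$, from the explicit description of ${\frak L}_{\mathfrak{o}}$ in Equation \eqref{equsimple} specialised to $\frak{S}_{\neg{\frak I}}=\{a\}$ (admittedly tersely), not from an internal multiplication on ${\frak L}_a$. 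The remaining parts of your plan (the reduction $J\cap{\frak L}_{\mathfrak{o}}\subset{\mathcal Z}_{Lie}({\frak L})=0$, the propagation when $|\frak{S}_{\neg{\frak I}}|>1$, and the final decomposition from \eqref{Llargo}) do track the paper's argument correctly.
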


\begin{proof}
Suppose ${\frak L}$ is not simple. Then there exists a non-zero ideal $I$ of ${\frak L}$ such that $I \notin \{{\frak I}, {\frak L}\}$. Let us distinguish three cases:

1. $|\frak{S}_{\frak I}| \leq 1$ and $|\frak{S}_{\neg {\frak I}}|>1.$ If $I \nsubseteq {\frak L}_{\mathfrak{o}} + {\frak I}$, we have as in Proposition \ref{nueva1} that $I = {\frak L}$. From here $I \subset {\frak L}_{\mathfrak{o}} + {\frak I}.$ Since we get $I \cap {\frak L}_{\mathfrak{o}} = 0$ as in Theorem \ref{last} we have $$I \subset {\frak I}.$$
Let us show that necessarily $|\frak{S}_{{\frak I}}| \neq 1.$ Indeed, if $|\frak{S}_{\frak I}| = 1$, then $\frak{S}_{\frak I} = \{a\}$ and so either $a \in \frak{S}_{\frak I}^{\bar 0} \cap \frak{S}_{\frak I}^{\bar 1}$ or $a \in \frak{S}_{\frak I}^{\bar i}$ with $a \notin \frak{S}_{\frak I}^{\bar i+ \bar 1 }$, for certain  ${\bar i} \in {\mathbb Z}_2$. We have as in Proposition \ref{propoI} that for some  ${\bar j} \in {\mathbb Z}_2$, $0 \neq {\frak L}_a^{\bar j} \subset I$ (in the second case $\bar{j} =\bar{i}$). Since ${\mathcal Z}_{Lie}({\frak L}) = 0$, there exists $b \in \frak{S}^{\bar k}_{\neg {\frak I}}$ such that $[{\frak L}_a^{\bar j}, {\frak L}_b^{\bar k}] + [{\frak L}_b^{\bar k}, {\frak L}_a^{\bar j}] \neq 0$ and so $a \star b \in \frak{S}^{\bar{j}+ \bar{k}}_{\frak I}.$ From here as $\frak{S}_{\frak I} = \{a\}$, $a \star b = a$ and so $b = \mathfrak{o}$, a contradiction. Hence 
the situation $|\frak{S}_{\frak I}| \leq 1$ and $|\frak{S}_{\neg {\frak I}}|>1$
is impossible.

2. $|\frak{S}_{\neg {\frak I}}| \leq 1$ and $|\frak{S}_{\frak I}|>1.$ If $I \subset {\frak L}_{\mathfrak{o}} + {\frak I}$, we have as in item 1. that $I \subset {\frak I}$ and then by Proposition \ref{propoI} we obtain  $I = {\frak I}$, a contradiction because  $I \notin \{{\frak I}, {\frak L}\}$. From here $$I \nsubseteq {\frak L}_{\mathfrak{o}} + {\frak I}.$$ Since Equation \eqref{equsimple} gives us $|\frak{S}_{\neg {\frak I}}| \neq 0$ (in the opposite case ${\frak L}_{\mathfrak{o}} = 0$), hence $|\frak{S}_{\neg {\frak I}}| = 1$. From Equation \eqref{equsimple}  we get straightforward the case (2).

3. $|\frak{S}_{\frak I}| \leq 1$ and $|\frak{S}_{\neg {\frak I}}| \leq 1.$ If $I \subset {\frak L}_{\mathfrak{o}} + {\frak I}$ we have as in item 1.,   a contradiction.
If $I \nsubseteq {\frak L}_{\mathfrak{o}} + {\frak I}$, by arguing as in item 2., we prove that necessarily possibility (2) holds, completing the proof. 
\end{proof}

\end{document}